\newtheorem{theorem}{Theorem}[section]
\newtheorem{corollary}[theorem]{Corollary}
\newtheorem{lemma}[theorem]{Lemma}
\newtheorem{proposition}[theorem]{Proposition}
\theoremstyle{definition}
\newtheorem{definition}[theorem]{Definition}
\newtheorem{remark}[theorem]{Remark}
\newtheorem{example}[theorem]{Example}
\DeclareMathOperator{\pr}{pr}
\newcommand{\TT}{{\mathbb{T}}}
\newcommand{\V}{{\mathcal{V}}}
\newcommand{\tg}{{\mathsf{t}}}
\newcommand{\s}{{\mathsf{s}}}
\newcommand{\m}{{\mathsf{m}}}
\newcommand{\rr}{{\rightrightarrows}}
\newcommand{\R}{\mathbb{R}}
\newcommand{\sfe}{\mathbb{S}}
\newcommand{\inv}{^{-1}}
\newcommand{\N}{\mathbb{N}}
\newcommand{\mx}{\mathfrak{X}}
\newcommand{\lie}[1]{\mathfrak{#1}}
\newcommand{\dr}{\mathbf{d}}
\newcommand{\ip}[1]{{\mathbf{i}}_{#1}}
\newcommand{\an}[1]{\arrowvert_{#1}}
\DeclareMathOperator{\Id}{Id}
\DeclareMathOperator{\rk}{rank}
\DeclareMathOperator{\dom}{Dom}
\DeclareMathOperator{\Exp}{Exp}
\title[The leaf space of a multiplicative foliation]
      {The leaf space of a multiplicative foliation}
\author[M. Jotz]{}
\subjclass[2010]{Primary: 58H05,  53C12; Secondary: 22A22, 53D17.}
 \keywords{Lie groupoids,  foliations, multiplicative structures, Poisson groupoids}
 \email{madeleine.jotz@a3.epfl.ch}
\thanks{The author was supported by Swiss NSF grant 200021-121512.}
\begin{document}
\maketitle

% Enter the first author's name and address:
\centerline{\scshape M. Jotz }
\medskip
{\footnotesize
% please put the address of the first author
 \centerline{Section de Math\'ematiques}
   \centerline{Ecole Polytechnique
  F{\'e}d{\'e}rale de Lausanne}
   \centerline{CH-1015 Lausanne, Switzerland}
}

\bigskip

% The name of the associate editor will be entered by an editorial staff
% "Communicated by the associate editor name" is not needed for special issue.

%The abstract of your paper
\begin{abstract}
We show that if a smooth multiplicative subbundle
$S\subseteq TG$ on a groupoid $G\rr P$ is involutive and  satisfies completeness conditions,
 then its leaf space
$G/S$ inherits a groupoid structure over the space  of leaves of $TP\cap S$ in $P$. 

As an application, a special class of Dirac groupoids is shown to
project by  forward Dirac maps to Poisson groupoids.
\end{abstract}

\begin{center}
\emph{Dedicated to Tudor Ratiu for his 60th birthday.}
\end{center}

\tableofcontents

\section{Introduction}
Let $G$ be a Lie group with Lie algebra $\lie g=T_eG$ and multiplication map
$\mathsf m:G\times G\to G$. 
Then the tangent space $TG$ of $G$ is also a Lie group with unit $0_e\in\lie g$
and  multiplication map $T\mathsf m:TG\times TG\to TG$. 
A multiplicative 
distribution $S\subseteq TG$ is a distribution on $G$, i.e., $S(g):=S\cap T_gG$ is a vector subspace of 
$T_gG$ for all $g\in G$,  that is in addition  a subgroup
of $TG$. Since at each $g\in G$, $S(g)$ is a vector 
subspace of $T_gG$, the zero section of $TG$ is contained in $S$. Thus,
using $T_{(g,h)}\mathsf m(0_g, v_h)=T_hL_gv_h$ for any $g,h\in G$ and
$v_h\in T_hG$, where $L_g:G\to G$ is the left translation by $g$, we find that the 
distribution $S$ is left invariant. It follows that $S$ is a smooth left invariant
 subbundle of $TG$
defined by $S(g)=\lie s^L(g)$, where $\lie s$ the  vector subspace $S(e)=S\cap\lie g$ of $\lie g$. 
In the same manner, $S$ is right invariant 
and we find thus that $\lie s$ is invariant under the adjoint action 
of $G$ on $\lie g$. Hence, $\lie s$ is an ideal in $\lie g$
and the subbundle $S\subseteq TG$ is completely integrable
in the sense of Frobenius. Its leaf $N$ through the unit element $e$ of $G$ is a normal subgroup of $G$
and since the leaf space $G/S$ of $S$ is equal to $G/N$, it inherits 
a group structure from $G$ such that the projection $G\to G/N$ is a homomorphism of groups.
If $N$ is closed in $G$, the foliation by $S$ is simple
and the leaf space $G/S=G/N$ is a Lie group such that the projection is a smooth surjective submersion.
Similar discussions leading to this result can be found in \cite{Ortiz08,Jotz11a}.

\medskip

If $G\rr P$ is a Lie groupoid, its tangent space $TG$  is, in the same manner as in 
the Lie group case, a Lie groupoid over the tangent space $TP$ of the units.
 A distribution $S\subseteq TG$  on $G$ is multiplicative
if it is a subgroupoid of $TG\rr TP$.
The problem studied in this paper is hence natural: we ask how the  simple fact
stated above about the leaf space of a
multiplicative
distribution on a Lie group generalizes to Lie groupoids.
We consider smooth multiplicative and involutive \emph{subbundles}
$S\subseteq TG$ (that is, distributions of constant rank). 
The foliation of $G$ by the leaves of $S$ is then said to be \emph{multiplicative}
and the intersection $TP\cap S$  is  automatically an involutive subbundle of $TP$.
We show that  the leaf space 
$G/S$ inherits the source, target, unit inclusion  and 
inversion maps of a groupoid over the space of leaves in $P$  of $TP
\cap S$, that are all compatible with the projections 
$G\to G/S$ and $P\to P/(TP\cap S)$.  
To be able to define a multiplication on the leaf space,
 we have to assume that a special family of vector fields spanning $S$
 on the Lie groupoid  
is complete in a sense that will be explained, and that a compatibility condition
on the intersections of the leaves with the $\s$-fibers is satisfied.

If the foliations $S$ and $S\cap TP$ are simple, we get hence, under these conditions,
the structure of a  \emph{Lie} groupoid
 on the leaf space $G/S\rr P/(S\cap TP)$, such that the projection is a Lie groupoid morphism.

Using a partial $S\cap TP$-connection on 
$AG/(AG\cap S)$ that is naturally induced by the multiplicative, involutive subbundle 
$S\subseteq TG$ \cite{JoOr11}, we show that 
$\Gamma(AG)$ is spanned as a $C^\infty(P)$-module 
by sections which left invariant images leave $S$ invariant. 
We show that under (rather strong) conditions on these special left invariant vector fields
that leave the foliation invariant,  the completeness condition on $S$ is not necessary and 
the compatibility condition that is necessary for the multiplication is satisfied.

\medskip

We illustrate the theory by four examples. The first two examples 
(Examples \ref{ex-basegp} and \ref{ex-VB}) illustrate the result of the main 
theorem, and the third example
(Example \ref{example_of_henrique}) shows that we recover as a special case
 the quotient of a Lie groupoid by a Lie group action by  groupoid morphisms.
This example suggests that our main theorem (Theorem \ref{groupoid_leaf_space})
could be useful for discrete mechanics on groupoids (see Remark \ref{geom_mech_remark}).
The fourth example (Example \ref{example_of_marco}, which is taken from \cite{Zambon10})
 is an example of a multiplicative foliation on a Lie groupoid that doesn't satisfy
the necessary  condition and which space of leaves fails
hence to inherit a groupoid multiplication.

\medskip

As an application, we show that if $(G\rr P, \mathsf D_G)$ is a
Dirac groupoid such that the characteristic distribution $TG\cap\mathsf D_G$ 
has constant rank and its leaf space is simple and satisfies the 
required conditions, then the quotient Lie groupoid
inherits the structure of a Poisson groupoid such that the quotient 
map is a forward Dirac map and a morphism of Lie groupoids.
 Again, this is a fact that is automatically true
for an integrable Dirac Lie group 
if the leaf $N$ through the unit $e$ of the characteristic foliation is closed 
in the group (\cite{Jotz11a}).

\subsubsection*{\textbf{Notations and conventions}}
Let $M$ be a smooth manifold. We will denote by $\mx(M)$ and $\Omega^1(M)$ the
spaces of (local) smooth sections of the tangent and the cotangent bundle of $M$,
respectively. For an arbitrary vector bundle $\mathsf E\to M$, the space of
(local) sections of $\mathsf E$ will be written $\Gamma(\mathsf E)$. 
We will write $\dom(\sigma)$ for the open subset of the smooth manifold $M$
where the local section $\sigma\in\Gamma(\mathsf E)$ is defined.

\medskip

A distribution $F$ on a smooth manifold $M$
is a subset $F\subseteq TM$ such that $F(m):=F\cap T_mM$ is a vector subspace
of $T_mM$ for all $m\in M$.
The distribution is smooth if for all
$v_m\in F(m)$, there exists a smooth vector field 
$X\in\mx(M)$ with values in $F$ such that $X(m)=v_m$.
Note that in general, the rank of 
$F(m)$ depends on $m$. If not, then $F$ is simply a subbundle of $TM$.

The \emph{pullback} or \emph{restriction} of a distribution $F\subseteq TM$ to an embedded
 submanifold $N$ of $M$  will be written $F\an{N}$. We say that the distribution
$F$ has constant rank on $N$ if the pullback $F\an{N}$ is a vector bundle over $N$, i.e.,
if all its fibers have the same dimension.

\medskip

Assume that $F$ is an involutive \emph{subbundle} of $TM$, hence completely integrable 
in the sense of Frobenius. We say that $F$, or the foliation defined by $F$,
is \emph{simple} if the leaf space $M/F$ of the foliation is 
a smooth manifold such that the projection $M\to M/F$ is a smooth surjective submersion.

\medskip

We write  $\mathsf E\times_M\mathsf F$ for the direct sum 
of two vector bundles $\mathsf E\to M$ and $\mathsf F\to M$.
The \emph{Pontryagin bundle} of $M$ is the direct sum $TM\times_M T^*M\to M$.
\footnote{Note that this vector bundle is also
sometimes called the \emph{generalized tangent bundle} in the literature.}

\medskip

Assume finally that $M,N$ are smooth manifolds and  $f:M\to N$
is a smooth surjective submersion. The tangent space 
to the $f$-fibers will be written $T^fM=\ker(Tf:TM\to TN)\subseteq TM$.

\section{The Pontryagin groupoid of a Lie groupoid}\label{generalities}

The general theory of Lie groupoids and their Lie algebroids can be found 
in \cite{Mackenzie05}, \cite{MoMr03}. We fix here first of all some notations and conventions.

A groupoid $G$ with base $P$ will be written  $G\rr P$. 
The set $P$ will be considered  most of the time as a subset of $G$, that is, the unity
$1_p$ will be identified 
 with $p$ for all $p\in P$.
A \emph{Lie groupoid}
is a groupoid $G$ on base $P$ together with the structures of \emph{smooth Hausdorff manifolds}
on $G$ and $P$ such that
the maps $\s,\tg:G\to P$ are \emph{surjective submersions}, and such that
the object inclusion 
map $\epsilon:P\hookrightarrow G$, $p\mapsto 1_p$  and the partial multiplication
$\mathsf m:G\times_PG\to G$ are all smooth.

Let $g\in G$, then the \emph{right translation by} $g$ is
\[R_g:\s\inv(\tg(g))\to\s\inv(\s(g)), \qquad h \mapsto R_g(h)=h\star g. 
\]
The \emph{left translation} $L_g$ is defined in an analogous manner.
Given a (local) bisection $K$ of $G\rr P$, we write also $R_K$ for the right-translation.

\medskip

In this paper, the Lie algebroid of the Lie groupoid $G\rr P$ 
is $AG:=T^\tg_PG$, equipped with the anchor map
$T\s\an{AG}$ and the Lie bracket defined by the left invariant vector fields.

\medskip

We give in the following  the induced 
Lie groupoid structures on the tangent, cotangent and Pontryagin bundle 
of a Lie groupoid.

\subsubsection*{\textbf{The tangent prolongation of a Lie groupoid}}

Let $G\rr P$ be a Lie groupoid. Applying the tangent functor to each of the
maps defining $G$ yields a Lie groupoid structure on $TG$ with base $TP$,
source $T\s$, target $T\tg$  and multiplication $T\m:T(G\times _PG)\to TG$.
The identity at $v_p\in T_pP$ is $1_{v_p}=T_p\epsilon v_p$.
This defines  the \emph{tangent prolongation $TG\rr TP$ of $G\rr P$}
or the \emph{tangent groupoid associated to $G\rr P$}.

\subsubsection*{\textbf{The cotangent Lie groupoid defined by a Lie groupoid}}
If $G\rr P$ is a Lie groupoid, then there is also an induced 
Lie groupoid structure on $T^*G\rr\, A^*G= (TP)^\circ$. The source map
$\hat\s:T^*G\to A^*G$ is given by 
\[\hat\s(\alpha_g)\in A_{\s(g)}^*G \text{ for } \alpha_g\in T_g^*G,\qquad 
\hat\s(\alpha_g)(u_{\s(g)})=\alpha_g(T_{\s(g)}L_gu_{\s(g)})\]
for all $u_{\s(g)}\in A_{\s(g)}G$, 
and the target map $\hat\tg:T^*G\to A^*G$
is given by \[\hat\tg(\alpha_g)\in A_{\tg(g)}^*G,
\qquad \hat\tg(\alpha_g)(u_{\tg(g)})
=\alpha_g\bigl(T_{\tg(g)}R_g(u_{\tg(g)}-T_{\tg(g)}\s u_{\tg(g)})\bigr)\]
for all $u_{\tg(g)}\in A_{\tg(g)}G$.
If $\hat\s(\alpha_g)=\hat\tg(\alpha_h)$, then the product $\alpha_g\star\alpha_h$
is defined
by 
\[(\alpha_g\star\alpha_h)(v_g\star v_h)=\alpha_g(v_g)+\alpha_h(v_h)
\]
for all composable pairs $(v_g,v_h)\in T_{(g,h)}(G\times_P G)$.
This Lie groupoid structure was introduced in \cite{CoDaWe87}
and is explained for instance in \cite{CoDaWe87}, 
\cite{Pradines88} and \cite{Mackenzie05}.

\subsubsection*{\textbf{The ``Pontryagin groupoid'' of a Lie groupoid}}
If $G\rr P$ is a Lie groupoid, there is hence 
an induced Lie groupoid structure on $\mathsf P_G=TG\times_G T^*G$
over $TP\times_P A^*G$. 
We will write $\TT\tg$ for the target map
\[\begin{array}{ccc}
\TT\tg:TG\times_G T^*G&\to& TP\times_P A^*G\\
(v_g,\alpha_g)&\mapsto&\left(T\tg(v_g),\hat\tg(\alpha_g)\right)
\end{array},
\]
$\TT\s$ for the source map
\[
\TT\s:TG\times_G T^*G\to TP\times_P A^*G
\]
and $\TT\epsilon$, $\TT\mathsf i$, $\TT\mathsf \m$ for the 
embedding of the units, the inversion map and the multiplication of this Lie groupoid.

\section{Multiplicative subbundles of the tangent space $TG$.}
We start with the definition of a multiplicative distribution on a Lie groupoid $G\rr P$. Later, we will
mostly be interested in multiplicative \emph{subbundles} of the tangent bundle $TG$. Yet, as we will see 
in  Examples \ref{ex-basegp}
and \ref{ex-VB}, our main result can already be valid 
 in the more general setting 
of smooth integrable multiplicative distributions.
Since the discussion about the Lie group case in the introduction
also starts from (even non necessarily smooth) distributions,
we prefer to give the general definition here.

\begin{definition}
Let $G\rightrightarrows P$ be a Lie groupoid and $TG\rr TP$ its tangent prolongation.
A distribution $S\subseteq TG$ is \emph{multiplicative} if
$S$ is a (set) subgroupoid of $TG\rr TP$.  
\end{definition}
Note that this means in particular that $T_g\s(v_g)\in S(\s(g))$ and 
$T_g\tg(v_g)\in S(\tg(g))$ for all $g\in G$ and $v_g\in S(g)$.

\subsection{Examples}
The two examples in this subsection illustrate the general theory in the following
sections. We study two completely integrable, multiplicative smooth distributions on special
classes of Lie groupoids 
and show that their  spaces of leaves inherit groupoid structures.
In the general theory, we will have to assume that the distributions have constant rank, 
that the elements of  a special  family of vector fields spanning $S$ are \emph{complete}, and that 
a compatibility condition on the leaves and the multiplication is satisfied.

We will see in both examples that the leaf spaces inherit groupoid structures, although the studied
distributions are here not necessarily \emph{subbundles} of the tangent space. 
This shows that our main theorem can also
hold for leaf spaces of singular distributions.
\begin{example}\label{ex-basegp}  
Let $M$ be a smooth manifold and $M\times M\,\rr\,M$ the pair groupoid associated to it. 
If $F$ is a smooth distribution on $M$, then 
$S:=F\times F\subseteq TM\times TM\simeq T(M\times M)$ is a
smooth multiplicative distribution in $TM\times TM\,\rr \,TM$. Its intersection
with $T\Delta_M$ is $\Delta_{F}$, 
where $\Delta_{F}(m,m)=\{(v_m,v_m)\mid v_m\in F(m)\}$, which is
also a smooth distribution on $\Delta_M$.

If $F$ is completely integrable in the sense of Stefan and Sussmann, 
then $S$ and $\Delta_{F}$ are also completely integrable.
Let $\pr_{S}:M\times M\to  (M\times M)/S$ and $\pr_{F}:M\to
M/F$ be  the quotient maps. If $L_m$, respectively
$L_n$ is the leaf of $F$ through $m$, respectively $n$, then 
the leaf of $S$ through
$(m,n)\in M\times M$ is $L_{(m,n)}=L_m\times L_n$. Thus, the space $(M\times M)/S$
of leaves of $S$  coincides with $M/F\times
M/F$ via the map $\Phi:(M\times M)/S\to M/F\times M/F$, 
$\pr_{S}(m,n)\mapsto (\pr_F(m), \pr_F(n))$.

Since 
the following diagram commutes, 
\begin{displaymath}
\begin{xy}
\xymatrix{
M\times M\ar[d]_{\pr_S}\ar[rd]^{\pr_F\times \pr_F}& \\
(M\times M)/S\ar[r]_\Phi&M/F\times M/F
}
\end{xy}
\end{displaymath}
it is easy to check that $\Phi$ is a homeomorphism.

The groupoid structure on $(M\times M)/S\simeq M/F\times M/F$ is just the pair groupoid structure
$M/F\times M/F\,\rr\, M/F$ and $(\pr_S,\pr_F)$ is a groupoid morphism.
That is, the following diagram commutes
\begin{displaymath}
\begin{xy}
\xymatrix{
M\times M\ar[r]^{\pr_S}\ar@<.6ex>^\s[d]
\ar@<-.6ex>_\tg[d]&(M\times M)/S
\ar@<.6ex>^\s[d]
\ar@<-.6ex>_\tg[d]\\
M\ar[r]_{\pr_F}& M/F
}
\end{xy}
\end{displaymath}
and we have $\pr_S((m,n)\star(n,p))=\pr_S(m,n)\star\pr_S(n,p)$
for all $m,n,p\in M$.

\medskip

Note that in general, $(M\times M)/S\to M/F$ 
is not a \emph{Lie} groupoid since the quotients 
$(M\times M)/S$ and $M/F$ are not necessarily smooth manifolds. 
 If $F$ is an involutive subbundle of $TM$ such that the leaf space
$M/F$ is  a smooth manifold and $\pr_F:M\to M/F$ a
smooth surjective submersion, then the induced groupoid $(M\times M)/S\,\rr\,
M/F$ is a Lie groupoid.
\end{example}

\begin{example}\label{ex-VB}
Let $M$ be a smooth manifold and $\mathsf p:\R^k\times M\to M$ the trivial
vector bundle over $M$. Then $\R^k\times M\,\rr\, M$ has the structure 
of a Lie groupoid over the base $M$.

We identify in the following $T(\R^k\times M)$ with $\R^k\times \R^k\times
TM$, and we write $T_{(x,m)} (\R^k\times M)=\{x\}\times\R^k\times T_mM$.
The source and target maps in $T(\R^k\times M)\,\rr\, TM$ are then
$T\s(x,v,v_m)=T\tg(x,v,v_m)=v_m$ and
the partial multiplication is given by
$(x,v,v_m)\star(y,w,v_m)=(x+y,v+w,v_m)$. Hence, the groupoid $T(\R^k\times
M)\,\rr\, TM$ is the vector bundle groupoid $\R^{2k}\times TM\,\rr \,TM$.

Consider a completely integrable, smooth distribution $F$ on $M$
and a vector subspace $W\subseteq \R^k$. It is easy to see that 
$S:=\R^k\times W\times F$ is an integrable multiplicative
 distribution in $T(\R^k\times M)$.

The leaf of $S$ through $(x,m)$ is the set $(x+W)\times L_m$, 
where $L_m$ is the leaf of $F$ through $m$. The intersection
$TM\cap S$ equals $F$ and we have a (singular) foliation $M/F$.
It is easy to see
 that the quotient $(\R^k\times M)/S\simeq (\R^k/W)\times(M/F)$ inherits the structure of 
 a groupoid over $M/F$
such that the diagram
\begin{displaymath}
\begin{xy}
\xymatrix{
\R^k\times M\ar@<.6ex>[d]^\s\ar@<-.6ex>[d]_\tg\ar[r]^{\pr_S\qquad}
& (\R^k/W)\times(M/F)\ar@<.6ex>[d]^\s\ar@<-.6ex>[d]_\tg\\
M \ar[r]_{\pr_F}& M/F
}
\end{xy}
\end{displaymath}
commutes and $(\pr_S,\pr_F)$ is a morphism of groupoids.
Here also, we get a \emph{Lie} groupoid if and only if 
$F$ is a \emph{simple involutive subbundle} of $TM$, i.e.,
its leaf space is a smooth manifold and the projection is a smooth surjective submersion. 
\end{example}

\begin{remark}
Note that in Example \ref{ex-basegp}, 
the quotient groupoid $(M\times M)/S\,\rr\, M/F$ 
is exactly the quotient 
of $M\times M\,\rr\, M$ by the kernel of $(\pr_S, \pr_F)$, i.e., the
normal subgroupoid \[N:=\ker(\pr_S,\pr_F)=\cup_{m\in M}(L_m\times L_m).\]
In Example \ref{ex-VB},
the groupoid structure on the leaf space does not coincide with  the 
quotient by the kernel $N$ but rather with the quotient 
by the \emph{normal subgroupoid system} $(N, R(\pr_F),\theta)$, 
where  \[N=\ker(\pr_S,\pr_F)=\cup_{m\in M}L_{(0,m)}
=W\times M,\] 
\[R(\pr_F)=\{(m,n)\in M\times M\mid m\sim_F n\}\] are the subgroupoids
of $\R^k\times M\rr M$ and  
of $M\times M\,\rr\,M$, respectively, and $\theta$ is the action of 
$R(\pr_F)$ on $G/N:=(\R^k/W)\times M\to M$ given by
$\theta(m,n)(x+W,n)=(x+W,m)$ for all $x\in\R^k$ and $(m,n)\in R(\pr_F)$.
These two quotients are  explained in more details in \cite{thesis}.

\medskip

This illustrates hence the different
definitions of normal subgroupoids objects in  
\cite{Mackenzie87} and \cite{Mackenzie05}, and the fact that the notion of
 \emph{normal subgroupoid systems of groupoids}
is needed to generalize to groupoids the relation 
between the kernel of surjective homomorphisms and normal subgroups of a group
(see
\cite{Mackenzie05} for more details). 
\end{remark}

\subsection{Properties of multiplicative subbundles of $TG$}
In the following, the subbundle $S\subseteq TG$ is \emph{always assumed 
to be smooth}, even if this is not stated explicitely.
\begin{lemma}\label{constant_rank}
Let $G\rr P$ be a Lie groupoid and
 $S\subseteq TG$ a multiplicative \emph{subbundle}. 
 Then the intersection $S\cap TP$ has constant rank on $P$.
Since this is the set of units of $S$ viewed as a
subgroupoid of $TG$, 
the pair $S\rr (S\cap TP)$
is a Lie groupoid. 

The bundle $S\an{P}$
splitts as $S\an{P}=(S\cap TP)\oplus(S\cap AG)$.
Furthermore, if we denote by $S^\tg$ the intersection 
 $S\cap T^\tg G$ of vector bundles over $G$, 
we have $S^\tg(g)=0_g\star S^\tg(\s(g))
=T_{\s(g)}L_g\left(S^\tg(\s(g))\right)$ for all
$g\in G$. In the same manner, 
$S^\s(g)=S^\s(\tg(g))\star 0_g$ for all
$g\in G$.

As a consequence, the intersections $S\cap T^\tg G$ and 
$S\cap T^\s G$ have  constant rank on $G$.
\end{lemma}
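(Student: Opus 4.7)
The plan is to first analyze $S\an{P}$ by exploiting that $T\tg$ restricted to $T_pP$ is the identity with kernel $A_pG$, which yields a natural fibrewise splitting at units; then to promote that fibrewise statement to constant rank by a semi-continuity sandwich; and finally to propagate everything from $P$ out to all of $G$ via multiplication with the zero vector $0_g$, which plays the role of left translation inside the tangent prolongation.

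For the splitting at $p\in P$, the starting point is that multiplicativity forces $T\tg(S\an{p})\subseteq S\cap TP$, since the target of an arrow of $S$ in $TG\rr TP$ must be a unit of $S$, and the units of $S$ are exactly $S\cap TP$. Using the decomposition $T_pG=T_pP\oplus A_pG$, write any $v_p\in S\an{p}$ as $v_p=w_p+u_p$ with $w_p\in T_pP$, $u_p\in A_pG$; then $w_p=T_p\tg(v_p)\in S\cap T_pP$, so $u_p=v_p-w_p\in S\cap A_pG$. This gives the fibrewise splitting $S\an{p}=(S\cap T_pP)\oplus(S\cap A_pG)$, and also the identity $T_p\tg(S\an{p})=S\cap T_pP$ (the reverse inclusion being trivial since $T_p\tg$ is the identity on $T_pP$).

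The main obstacle is the constant rank of $S\cap TP$, which I would establish by a semi-continuity argument. As the fibrewise dimension of the intersection of the two smooth subbundles $S\an{P}$ and $TP$ of $TG\an{P}$, the function $p\mapsto\dim(S\cap T_pP)$ is upper semi-continuous. On the other hand, by the previous paragraph it equals the pointwise rank of the smooth vector bundle morphism $T\tg\colon S\an{P}\to TP$ over $P$, and so is lower semi-continuous. Hence it is constant, i.e.\ $S\cap TP$ is a smooth subbundle of $TP$; combined with the splitting and constancy of $\dim S\an{p}$, this forces $S\cap AG$ also to have constant rank.

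For the left-invariance formula $S^\tg(g)=T_{\s(g)}L_g(S^\tg(\s(g)))$, observe that $0_g\in S$ (the zero section), and for $u\in S^\tg(\s(g))$ the pair $(0_g,u)$ is composable in $TG\rr TP$ since $T\s(0_g)=0_{\s(g)}=T\tg(u)$. Multiplicativity then puts $T\m(0_g,u)=T_{\s(g)}L_gu\in S$, and this vector lies in $T^\tg_gG$ because $L_g$ maps $\tg$-fibres to $\tg$-fibres, giving the inclusion ``$\supseteq$''. The reverse inclusion follows by applying the same recipe to $g\inv$: for $v\in S^\tg(g)$, the vector $T_gL_{g\inv}v=T\m(0_{g\inv},v)$ lies in $S^\tg(\s(g))$ and $v=T_{\s(g)}L_g(T_gL_{g\inv}v)$. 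The symmetric claim for $S^\s$ is obtained by replacing left with right translation. Constant rank of $S^\tg$ and $S^\s$ on $G$ now follows at once, since $\dim S^\tg(g)=\dim S^\tg(\s(g))=\dim(S\cap A_{\s(g)}G)$ is already known to be constant on $P$. Finally, $S\rr S\cap TP$ is a Lie groupoid: $S$ and $S\cap TP$ are smooth submanifolds of $TG$ and $TP$, all structure maps restrict smoothly from those of $TG\rr TP$, and $T\tg\an{S},T\s\an{S}$ are fibrewise surjective onto $S\cap TP$, hence submersions.
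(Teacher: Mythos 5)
Your proof is correct, and most of it --- the fibrewise splitting $S(p)=(S(p)\cap T_pP)\oplus(S(p)\cap A_pG)$ obtained by subtracting $T_p\tg v_p$ from $v_p$, the propagation $S^\tg(g)=0_g\star S^\tg(\s(g))=T_{\s(g)}L_g\left(S^\tg(\s(g))\right)$ via multiplication by zero vectors, and the resulting constant rank on all of $G$ --- coincides with the paper's argument. The one step you handle genuinely differently is the constant rank of $S\cap TP$ itself. The paper shows that this intersection is \emph{smooth}, by producing through each $v_p\in S(p)\cap T_pP$ the local section $T\s(X\an{P})$ of $S\cap TP$, where $X\in\Gamma(S)$ satisfies $X(p)=v_p$, and then invokes a cited result (Proposition 4.4 of \cite{JoRaSn11}) stating that a smooth intersection of vector subbundles is a vector subbundle. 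You instead run the semicontinuity sandwich directly: upper semicontinuity of $p\mapsto\dim(S(p)\cap T_pP)$ as an intersection of subbundles against lower semicontinuity of the rank of the bundle morphism $T\tg:S\an{P}\to TP$, via the identity $T_p\tg(S(p))=S(p)\cap T_pP$. This is the same mechanism that underlies the quoted proposition, so your version is self-contained where the paper's is by citation; both rest on the same multiplicativity input. One small point: in your final sentence the fibrewise surjectivity of $T\tg:S(g)\to S(\tg(g))\cap T_{\tg(g)}P$ at a general $g\in G$ is asserted rather than derived; it does follow immediately from the rank bookkeeping you have already established, namely $\dim S(g)-\dim S^\tg(g)=\rk S-\rk(S\cap AG)=\rk(S\cap TP)$, and this is precisely the content of the corollary the paper proves separately right after the lemma, so you should either record that count or defer the claim.
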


\begin{proof}
We start by showing that the intersection  $S\an{P}\cap TP$
is smooth. If $p\in P$ and $v_p\in S(p)\cap T_pP$, then 
we find a smooth section $X$ of $S$ defined at $p$ such that
$X(p)=v_p$. The restriction of $X$ to $\dom(X)\cap P$ is then a 
smooth section of $S\an{P}$, and, since $\s$ is a smooth surjective submersion,
and $S$ is a subgroupoid of $TG$,
the image $T\s(X\an{S})$ is a smooth section of $S\an{P}\cap TP$. Futhermore, 
we have $T\s(X(p))=T\s(v_p)=v_p$ since $v_p\in T_pP$.

Since the intersection $S\cap TP$
 is a  smooth intersection of vector bundles 
over $P$, we know (for instance by Proposition 4.4 in \cite{JoRaSn11}) that 
it is a vector bundle  on $P$. In particular, it is the set of units of $S$ 
viewed as a subgroupoid of $TG$.

Since $S$ is a vector bundle on $G$, it has constant rank on $G$ and in particular 
on $P$. For each $p\in P$, we can write $S(p)=(S(p)\cap T_pP)\times(S(p)\cap
T^\tg_p G)=(S(p)\cap T_pP)\times(S(p)\cap T^\s_p G)$.
Indeed, if $v_p\in S(p)$, then we can write $v_p=T_p\tg v_p+(v_p-T_p\tg v_p)
=T_p\s v_p+(v_p-T_p\s v_p)$.

From this follows the fact that $S\an{P}\cap T^\tg_P G$ and 
$S\an{P}\cap T^{\s}_PG$ have constant rank on $P$.
If $v_g\in S^\tg(g)$, then we have $T\s(0_{g\inv})=0_{\tg(g)}=T\tg(v_g)$, 
and since $S(g\inv)$ is a vector subspace of $T_{g\inv}G$, we have 
$0_{g\inv}\in S(g\inv)$. Since $S$ is multiplicative, we find hence
$0_{g\inv}\star v_g\in S(\s(g))$. The equality 
$0_{g\inv}\star v_g=T_gL_{g\inv}v_g$ is easy to check. We show the other 
inclusion and the
equality  $S^\s(g)=T_{\tg(g)}R_g\left(S^\s(\tg(g))\right)$
in the same manner.

Since $S\an{P}\cap AG$ and $S\an{P}\cap T_P^\s G$ have constant rank on $P$, we get
from this that $S\cap T^\tg G$ and $S\cap T^\s G$ have constant rank on $G$.
\end{proof}

\begin{corollary}\label{surjectivity}
Let $G\rr P$ be a  Lie groupoid 
and $S$ a multiplicative \emph{subbundle} of $TG$.
 The induced
maps $T_g\s:S(g)\to S(\s(g))\cap T_{\s(g)}P
$ and $T_g\tg:S(g)\to S(\tg(g))\cap T_{\tg(g)}P$
are surjective for each $g\in G$.
\end{corollary}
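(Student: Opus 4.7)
My plan is to prove surjectivity by a dimension count, leveraging the splittings and the constant rank results established in Lemma \ref{constant_rank}. The inclusion $T_g\s(S(g)) \subseteq S(\s(g)) \cap T_{\s(g)}P$ is immediate: $S$ is a subgroupoid of $TG\rr TP$, so $T\s$ sends elements of $S$ to units of $S$ in $TP$, and $v_{\s(g)} = T\s(v_g)$ lies in $TP$ because $v_{\s(g)}$ is a unit. The nontrivial task is to check that the two subspaces have the same dimension.

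For the source map case, I would compute $\ker\bigl(T_g\s\an{S(g)}\bigr) = S(g) \cap T^\s_g G = S^\s(g)$ and then apply rank-nullity to get
\[
\dim T_g\s(S(g)) \;=\; \dim S(g) - \dim S^\s(g).
\]
Since $S$ is a subbundle, $\dim S(g) = \dim S(\s(g))$, and since $S \cap T^\s G$ has constant rank on $G$ (by Lemma \ref{constant_rank}), $\dim S^\s(g) = \dim S^\s(\s(g))$. Combining these with the splitting $S(\s(g)) = \bigl(S(\s(g))\cap T_{\s(g)}P\bigr) \oplus S^\s(\s(g))$, which follows from the same argument used in the proof of Lemma \ref{constant_rank} (writing $v_p = T_p\s\, v_p + (v_p - T_p\s\, v_p)$), yields
\[
\dim T_g\s(S(g)) \;=\; \dim\bigl(S(\s(g))\cap T_{\s(g)}P\bigr).
\]
Since $T_g\s(S(g))$ is a subspace of $S(\s(g)) \cap T_{\s(g)}P$ of the correct dimension, the two coincide.

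The argument for $T_g\tg: S(g) \to S(\tg(g)) \cap T_{\tg(g)}P$ is completely parallel, using $\ker\bigl(T_g\tg\an{S(g)}\bigr) = S^\tg(g)$, the constant rank of $S \cap T^\tg G$, and the splitting $S(\tg(g)) = \bigl(S(\tg(g)) \cap T_{\tg(g)}P\bigr) \oplus S^\tg(\tg(g))$. No real obstacle arises here because all of the substantive work, namely the constant-rank of $S \cap T^\s G$ and $S \cap T^\tg G$ together with the direct-sum decompositions of $S\an{P}$, has already been established in Lemma \ref{constant_rank}; the corollary is essentially a linear-algebraic bookkeeping consequence.
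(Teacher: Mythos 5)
Your proof is correct and follows essentially the same route as the paper: both arguments reduce to the observation that the kernel of $T_g\s$ restricted to $S(g)$ is $S(g)\cap T_g^\s G$, and then use the constant rank of $S\cap T^\s G$ together with the splitting $S(p)=(S(p)\cap T_pP)\oplus(S(p)\cap T_p^\s G)$ from Lemma \ref{constant_rank} to match dimensions. The paper merely packages your rank--nullity count as the statement that the induced injective bundle map $S/(S\cap T^\s G)\to S\cap TP$ is a fibrewise isomorphism between bundles of equal rank.
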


\begin{proof}
The map $T\s:S/(S\cap T^\s G)\to S\cap TP$
is a well-defined  injective  
vector bundle homomorphism over $\s:G\to P$. 
Since 
\begin{align*}
\rk(S/(S\cap T^\s G))&=\dim((S/(S\cap T^\s G))(g))=\dim(S(g))-\dim(S(g)\cap T_g^\s G)\\
&=\dim(S(\tg(g)))-\dim(S(\tg(g))\cap T_{\tg(g)}^\s G)\\
&=\dim(S(\tg(g))\cap T_{\tg(g)}P)=\rk(S\cap TP)
\end{align*}
for any $g\in G$,
both vector bundles have the same rank, 
and the map is an isomorphism in every fiber. Thus, the claim follows.
\end{proof}

The following corollary (see also a result
in \cite{Mackenzie00} about \emph{star-sections}) will be used often in the following.
\begin{corollary}\label{trelated}
Let $G\rr P$ be a Lie groupoid and $S\subseteq TG$ a multiplicative subbundle.
Let $\bar X$ be a section of $S\cap TP$ defined
on $\dom(\bar X)=:\bar U\subseteq P$. Then there exist
sections $X,Y \in\Gamma(S)$ defined on $U:=\s\inv(\bar U)$,
respectively $V=\tg\inv(\bar U)$ 
such that 
$X\sim_\s \bar X$ and $Y\sim_\tg \bar X$. 
\end{corollary}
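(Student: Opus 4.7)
The plan is to realise $S$ as a smooth extension of a pullback bundle via the target map and then lift $\bar X$ through a smooth splitting. Two ingredients from the preceding results do essentially all of the work. First, Lemma \ref{constant_rank} guarantees that both $S\cap TP$ and $S^\tg=S\cap T^\tg G$ are smooth vector bundles of constant rank over $P$ and $G$ respectively. Second, Corollary \ref{surjectivity} tells us that at every $g\in G$ the linear map $T_g\tg\colon S(g)\to S(\tg(g))\cap T_{\tg(g)}P$ is surjective. Together these say that $T\tg$ is a fibrewise surjective smooth vector bundle morphism from $S$ onto the pullback $\tg^*(S\cap TP)$, with kernel the subbundle $S^\tg$, yielding the short exact sequence of smooth vector bundles over $G$
$$0\longrightarrow S^\tg\longrightarrow S\xrightarrow{\;T\tg\;}\tg^*(S\cap TP)\longrightarrow 0.$$

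Every short exact sequence of smooth vector bundles splits smoothly: one may, for instance, endow $S$ with an arbitrary Riemannian fibre metric and take the orthogonal complement of $S^\tg$ inside $S$. So I pick a smooth bundle map $\sigma\colon\tg^*(S\cap TP)\to S$ over the identity of $G$ with $T\tg\circ\sigma=\id$, restrict it to $V=\tg\inv(\bar U)$, and define
$$Y(g):=\sigma_g\bigl(\bar X(\tg(g))\bigr)\quad\text{for }g\in V.$$
Then $Y\in\Gamma(S)$ is smooth on $V$ and satisfies $T_g\tg\,Y(g)=\bar X(\tg(g))$ for every $g\in V$, which is precisely $Y\sim_\tg\bar X$.

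The construction of $X\sim_\s\bar X$ on $U=\s\inv(\bar U)$ is strictly parallel, using the $\s$-side statements of Lemma \ref{constant_rank} and Corollary \ref{surjectivity} to produce the analogous extension
$$0\longrightarrow S^\s\longrightarrow S\xrightarrow{\;T\s\;}\s^*(S\cap TP)\longrightarrow 0,$$
choosing a smooth splitting $\sigma'$ and setting $X(g):=\sigma'_g(\bar X(\s(g)))$ for $g\in U$. There is no genuine obstacle in this argument: the only point one has to check with care is that the three bundles in each sequence really are smooth vector bundles of constant rank, but this has already been secured by the two cited results, and the remaining splitting step is entirely standard. Note that the freedom in $\sigma$ and $\sigma'$ reflects the fact that $Y$ is only determined modulo $\Gamma(S^\tg|_V)$ and $X$ only modulo $\Gamma(S^\s|_U)$.
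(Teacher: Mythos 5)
Your proof is correct and is essentially the paper's own argument in slightly different clothing: the paper phrases it via the fibrewise isomorphism $T\s:S/(S\cap T^\s G)\to S\cap TP$ and then chooses a smooth representative of the resulting section of the quotient bundle, which is exactly your splitting of the short exact sequence $0\to S^\s\to S\to \s^*(S\cap TP)\to 0$. Both rely on the same two ingredients (Lemma \ref{constant_rank} for constant rank and Corollary \ref{surjectivity} for fibrewise surjectivity), so no further comment is needed.
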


\begin{proof}
Since the induced  map $T\s:S/(S\cap T^\s G)\to S\cap TP$ is a 
smooth isomorphism
in every fiber, there exists a unique 
smooth section $\sigma$ of 
$S/(S\cap T^\s G)$ defined on $\s\inv(\bar U)$
such that $\s(\sigma(g))=\bar X(\s(g))$
for all $g\in U$. Choose a representative 
$X\in\Gamma(S)$ for $\sigma$, then we have 
$T_g\s X(g)=\bar X(\s(g))$ for all $g\in U$.
\end{proof}

We say that a vector field $X\in\mx(G)$ is $\tg$- (respectively $\s$-)
descending
if there exists $\bar X\in\mx(P)$ such that
$X\sim_\tg \bar X$ (respectively $X\sim_\s\bar X$),
that is, for all $g\in\dom(X)$, we have 
$T_g\tg X(g)=\bar X(\tg(g))$.
 
\begin{corollary}
Let $G\rr P$ be a Lie groupoid and $S\subseteq TG$ a multiplicative subbundle. Then 
\begin{enumerate}
\item $S$ is spanned by local $\tg$-descending sections and 
\item $S$ is spanned by local $\s$-descending sections.
\end{enumerate}
\end{corollary}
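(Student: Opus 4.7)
The plan is to build, near an arbitrary point $g_0 \in G$, a local frame of $S$ consisting entirely of $\tg$-descending sections, by combining a local frame of the vertical subbundle $S^\tg = S \cap T^\tg G$ with lifts of a local frame of $S \cap TP$ given by Corollary \ref{trelated}; the statement for $\s$ then follows by the symmetric argument.

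For the vertical part, Lemma \ref{constant_rank} tells me that $S^\tg$ is a smooth subbundle of $TG$ of constant rank, so there exists a smooth local frame $Z_1,\dots,Z_n \in \Gamma(S^\tg)$ near $g_0$. Each $Z_j$ satisfies $T\tg \circ Z_j = 0$, hence is $\tg$-descending with $\bar Z_j = 0 \in \mx(P)$.

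For the horizontal part, I pick an open neighbourhood $\bar U \subseteq P$ of $\tg(g_0)$ admitting a smooth frame $\bar X_1,\dots,\bar X_m$ of $S \cap TP$. By Corollary \ref{trelated}, each $\bar X_i$ lifts to a section $Y_i \in \Gamma(S)$ on $V = \tg\inv(\bar U)$ with $Y_i \sim_\tg \bar X_i$, so each $Y_i$ is $\tg$-descending by definition.

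To verify that $Y_1,\dots,Y_m,Z_1,\dots,Z_n$ is a local frame of $S$ on a neighbourhood of $g_0$, I will use the $\tg$-analogue of Corollary \ref{surjectivity}: that $T_h\tg$ maps $S(h)$ onto $(S \cap TP)(\tg(h))$ with kernel $S^\tg(h)$, so that $T\tg$ descends to a fibrewise vector bundle isomorphism $S/S^\tg \to \tg^*(S \cap TP)$. The containment $T_h\tg(S(h)) \subseteq (S \cap TP)(\tg(h))$ and the computation of the kernel are immediate from the fact that $S$ is a subgroupoid of $TG\rr TP$. Surjectivity then follows from the rank identity
\[
\dim S(h) \;=\; \dim S^\tg(h) + \dim (S \cap TP)(\tg(h)),
\]
which comes from the splitting $S\an{P} = (S \cap TP) \oplus (S \cap AG)$ in Lemma \ref{constant_rank} applied at the unit $\tg(h)$, together with left-invariance $\dim S^\tg(h) = \dim S^\tg(\s(h)) = \dim (S \cap AG)(\s(h))$ and the fact that $S \cap AG$ has constant rank on $P$. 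Granted this isomorphism, the images $T_h\tg Y_i(h) = \bar X_i(\tg(h))$ form a basis of $(S \cap TP)(\tg(h))$, so the classes $[Y_i(h)] \in S(h)/S^\tg(h)$ form a basis, and adjoining the basis $Z_1(h),\dots,Z_n(h)$ of $S^\tg(h)$ yields a basis of $S(h)$.

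The only real work beyond the previously established results is the $\tg$-analogue of Corollary \ref{surjectivity}, which is entirely routine; the second assertion of the corollary (spanning by $\s$-descending sections) is proved by the mirror-image construction, using right-invariance $S^\s(g) = S^\s(\tg(g))\star 0_g$ from Lemma \ref{constant_rank} and the $\s$-lift asserted in Corollary \ref{trelated}.
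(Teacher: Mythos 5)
Your proof is correct and follows essentially the same route as the paper: both span $S$ locally by a frame of $S\cap T^\tg G$ (automatically $\tg$-descending to $0$) together with $\tg$-descending lifts, via Corollary \ref{trelated}, of a local frame of $S\cap TP$. The only differences are cosmetic --- the paper frames $S\cap T^\tg G$ by left-invariant vector fields $Y_i^l$ rather than an arbitrary local frame, and leaves the fibrewise spanning count (which you carry out explicitly) implicit.
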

\begin{proof}
Choose $g\in G$ and smooth sections $\bar X_1,\ldots,\bar X_k$
of $S\cap TP$ spanning $S\cap TP$ on a neighborhood
$U_1$ of $\tg(g)$. Choose also $Y_1,\ldots,Y_m$ spanning $(S\cap T^\tg G)\an{P}$
in a neighborhood $U_2$ of $\s(g)$.
The vector fields 
$Y_1^l,\ldots,Y_m^l$ span $S\cap T^\tg G$ on the neighborhood $\s\inv(U_2)$
of $g$ and we find smooth $\tg$-descending sections 
$X_1,\ldots,X_k$ of $S$ such that $X_i\sim_\tg\bar X_i$ on $\tg\inv(U_1)$.
The sections $Y_1^l,\ldots,Y_m^l,X_1,\ldots,X_k$ 
are $\tg$-descending and span $S$ on the neighborhood $U:=\s\inv(U_2)\cap 
\tg\inv(U_1)$ of $g$.
\end{proof}

\medskip

Let $M$ be a smooth manifold and $F\subseteq TM$ a subbundle spanned
by a family $\mathcal F$ of vector fields.
If $F$ is involutive, it is integrable in the sense of Frobenius 
and each of its leaves is an \emph{accessible set}
of $\mathcal F$, i.e.,
the leaf $L_m$ of $F$ through  $m\in M$
is the set 
$$L_m=\left\{\phi^{X_1}_{t_1}\circ\ldots\circ  \phi^{X_k}_{t_k}(m)\left|
\begin{array}{c}
k\in \N, X_1,\ldots, X_k\in\mathcal F, t_1,\ldots,t_k\in \R\\
\text{ and } \phi^{X_i} \text{ is a local flow of } X_i
\end{array}\right.\right\}$$
(see \cite{OrRa04}, \cite{Stefan74a}, \cite{Sussmann73}, \cite{Stefan80}).
If the multiplicative subbundle $S\subseteq TG$ is involutive, we get the following corollary
which will be very useful in the next section.
\begin{corollary}\label{accessible-sets}
Let $G\rr P$ be a Lie groupoid and $S\subseteq TG$ an involutive multiplicative
subbundle.
Then $S$ is completely integrable in the sense of Frobenius and its leaves 
are the accessible sets of each of the two following families of vector fields.
\begin{align}
\mathcal F^\tg_S=&\{X\in\Gamma(S)\mid \exists \bar X\in\mx(P) \text{ such that } 
X\sim_\tg \bar X\}\label{t_accessible}\\
\mathcal F^\s_S=&\{X\in\Gamma(S)\mid \exists \bar X\in\mx(P) \text{ such that } 
X\sim_\s \bar X\}\label{s_accessible}.
\end{align}
\end{corollary}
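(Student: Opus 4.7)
The plan is to combine Frobenius' theorem with the preceding corollary, so that essentially nothing new has to be proved. Since $S$ is a smooth involutive subbundle of $TG$, complete integrability in the sense of Frobenius is automatic, and the leaves $L_g$ of $S$ through points $g\in G$ are well-defined immersed submanifolds of $G$.

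To identify each $L_g$ with the accessible set of $\mathcal F^\tg_S$ (respectively $\mathcal F^\s_S$) through $g$, I would invoke the Stefan--Sussmann description recalled just above the statement: whenever a family $\mathcal F\subseteq \Gamma(S)$ spans $S$ pointwise, the accessible set of $\mathcal F$ through any point coincides with the leaf of $S$ through that point. Indeed, accessible sets of such an $\mathcal F$ are contained in leaves because flows of sections of $S$ preserve the Frobenius foliation, and they contain the leaves because $\mathcal F$ spans $S$. Thus the entire corollary reduces to checking that both $\mathcal F^\tg_S$ and $\mathcal F^\s_S$ span $S$.

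This is precisely the content of the previous corollary, which produces, around every $g\in G$, a neighborhood on which $S$ is spanned by local $\tg$-descending sections (and analogously by local $\s$-descending ones). Under the paper's convention that $\Gamma(S)$ comprises local sections with arbitrary open domain, each such spanning section is by definition an element of $\mathcal F^\tg_S$ (respectively $\mathcal F^\s_S$). Hence both families span $S$ at every point, and applying the Stefan--Sussmann identification twice yields the two descriptions of $L_g$.

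I do not expect a genuine obstacle here; the corollary is a direct assembly of Frobenius with the preceding spanning result. The only point that requires a moment of care is the remark that the local $\tg$- and $\s$-descending sections furnished by the previous corollary legitimately belong to $\mathcal F^\tg_S$ and $\mathcal F^\s_S$, which follows from the convention on $\Gamma(S)$ fixed in the \emph{Notations and conventions} paragraph.
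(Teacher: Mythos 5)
Your proposal is correct and follows exactly the route the paper intends: the paper states this corollary without a separate proof, as an immediate consequence of the Stefan--Sussmann fact quoted just above it together with the preceding corollary that $S$ is spanned by local $\tg$-descending (respectively $\s$-descending) sections. Your only added observation, that those local spanning sections belong to $\mathcal F^\tg_S$ and $\mathcal F^\s_S$ under the paper's convention on local sections, is accurate and consistent with the paper's usage.
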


By Corollary \ref{trelated}, there exists for each section $\bar X$
of $TP\cap S$ defined on $U\subseteq P$ a section 
of $S$ that is defined on $\tg\inv(U)$ and $\tg$-related to $\bar X$. We can find a family of spanning
sections of $S\cap TP$ that are all complete, but the 
corresponding sections of $S$ are then not necessarily complete.
For the proof of our main theorem,
 we will have to assume that $S\cap TP$ is spanned by the following families
of vector fields:
\begin{align}
\bar{\mathcal F}^\tg_S&:=\left\{\bar X\in\Gamma(S\cap TP)
\left|\begin{array}{c}
 \exists X\in\Gamma(S) \text{ such that } 
X\sim_\tg \bar X\\
\text{ and } X, \bar X \text{ are complete},\\
 \dom(X)=\tg\inv(\dom(\bar X))
\end{array}\right.
\right\}\label{complete_STP_tg}\\
\bar{\mathcal F}^\s_S&:=\left\{\bar X\in\Gamma(S\cap TP)
\left|\begin{array}{c}
 \exists X\in\Gamma(S) \text{ such that } 
X\sim_\s \bar X\\
\text{ and } X, \bar X \text{ are complete},\\
 \dom(X)=\s\inv(\dom(\bar X))
\end{array}\right.
\right\}.\label{complete_STP_s}
\end{align}
Note that there exists a complete family $\bar{\mathcal F}^\tg_S$ if and only if there exists 
a complete family $\bar{\mathcal F}^\s_S$, since the vector fields 
in $\mathcal F^\tg_S$ are the inverses of the vector fields in 
$\mathcal F^\s_S$ and vice versa. 
We say that $S$ is \emph{complete} if it has this property.

For instance, if $G\rr P$ is such that the target map $\tg$ (and equivalently the source map $\s$)
 is proper, then it is easy to see that any multiplicative subbundle $S\subseteq TG$
is complete.
Also, note that the multiplicative distributions in Examples \ref{ex-basegp} and \ref{ex-VB}
are complete.

\bigskip

We study now the left invariant vector fields that leave the multiplicative subbundle $S$ invariant.
The following lemma is a result that is shown in \cite{JoOr11}.
\begin{lemma}\label{lem_work_in_progr}
Let $G\rr P$ be a Lie groupoid and $S\subseteq TG$ 
an involutive multiplicative subbundle. Then 
the Bott connection, i.e., the partial $S$-connection on $TG/S$,
induces a  flat partial $S\cap TP$-connection $\nabla$ on $AG/(S\cap AG)\to P$
with the following property. If $a\in  \Gamma(AG)$
is such that its class in $\Gamma\left(AG/(S\cap AG)\right)$
is $\nabla$-parallel, then the left invariant vector field $a^l$ satisfies
\begin{equation}\label{very_useful}
 \left[a^l,\Gamma(S)\right]\subseteq \Gamma(S).
\end{equation}
\end{lemma}
Note that \eqref{very_useful} is automatically satisfied for any 
$a\in\Gamma(AG\cap S)$ since $S$ is involutive and 
$a^l$ is a section of $S\cap T^\tg G$ (see Lemma \ref{constant_rank}).
We can hence say that a section $a\in\Gamma(AG)$ is $\nabla$-parallel
if its class $\bar{a}$ in $\Gamma(AG/(AG\cap S))$
is $\nabla$-parallel.

Note also that \eqref{very_useful} implies that the flow of $a^l$ leaves the subbundle
$S\subseteq TG$ invariant. This fact is well-known, see for instance the appendix of \cite{JoOr11} for a proof of it.

\begin{corollary}\label{cor_of_crucial}
Let $G\rr P$ be a Lie groupoid and $S\subseteq TG$ 
an involutive multiplicative subbundle. Then
$AG$ is spanned by the following family of (local) sections 
\[\mathcal A^S:=\left\{a\in\Gamma(AG)\left| \left[a^l,\Gamma(S)\right]\subseteq \Gamma(S)\right.\right\}.\] 
\end{corollary}
\begin{proof} We show that there 
exists for each point $p\in P$ a local frame 
of $\nabla$-parallel sections for $AG$ defined on a neighborhood of $p$, where 
$\nabla$ is the flat partial $S\cap TP$-connection as in the preceding lemma.
Let $m:=\dim(P)$ and $r:=\operatorname{rank}(AG)$. Choose  a foliated chart domain $U$ centered at $p$ and 
described by
coordinates $(x^1,\dots,x^m)$ such that the first $ k $ among them define the 
local integral submanifold of $TP\cap S$ containing $p$.
Let $\Sigma\subseteq U$ be the slice 
$\phi^{-1}(\{0\}\times \R^{m-k})$, where $\phi:U\to\R^m$ is
the chart adapted to the foliation.

Denote $l:=\operatorname{rank}(AG/(AG\cap S))$.
Choose $ a_1,\ldots, a_l\in\Gamma(AG)$ such that 
 $\bar a_1,\ldots,\bar a_l\in \Gamma(AG/(AG\cap S))$ 
is a  basis frame for $AG/(AG\cap S)$ on $U$. 
We consider this frame at points of $\Sigma\cap U$ and 
construct $\bar \alpha_1,\ldots, \bar \alpha_l\in\Gamma(AG/(AG\cap S))$
as follows. If  $q\in U$, $\phi(q)=(x_1,\ldots,x_m)$, then we find a path
$c:[0,1]\to\phi^{-1}(\R^k\times \{(x_{k+1},\ldots,x_n)\})$
(the leaf of $TP\cap S$ through $q$) 
with $c(1)=q$ and $c(0)=q'\in \Sigma$ 
satisfying $\phi(q')=(0,\ldots,0,x_{k+1},\ldots,x_n)$.
Define
 $\bar{\alpha}_i(q):=P_c^1(\bar a_i(q'))$, where 
$P_c^1(\bar a_i(q'))$ is the parallel translate 
of $\bar a_i(q')$ along $c$ at time $1$ by the $TP\cap S$-partial 
connection $\nabla$.
Since $U$ is simply connected and  $\nabla$ is flat,
parallel translation is independent of the chosen path
(see, for example, \cite{Iliev06}), hence the sections
 $\bar\alpha_i$ are $\nabla$-parallel sections of $AG/(AG\cap S)$
and   form a pointwise basis of $AG/(AG\cap S)$ on $U$.
Choose representatives $\alpha_1,\ldots,\alpha_l\in\Gamma(AG)$
for $\bar \alpha_1,\ldots,\bar \alpha_l$. 
 Take $\alpha_{l+1},\ldots,\alpha_r$ to be a frame of $AG\cap S$ over $U$. Then 
$\alpha_{1},\ldots,\alpha_r$ is a frame of $AG$ over $U$
composed of $\nabla$-parallel sections.
\end{proof}

\subsection{The leaf space of an involutive multiplicative subbundle of $TG$.}
Let $G\rr P$ be a Lie groupoid and $S\subseteq TG$ an involutive 
multiplicative subbundle. Then
$S$ is completely
integrable in the sense of Frobenius.
Let $\pr:G\to G/S$ be the projection to the space of leaves of $S$.

 It is easy to check that
the intersection $S\cap TP$ is an involutive 
subbundle of $TP$ and hence itself
also completely integrable. 
Let $P/S$ be \emph{the space of leaves of $S\cap TP$} in 
$P$ and $\pr_\circ$ the projection $\pr_\circ: P\to  P/S$.
Note that by a theorem in \cite{Jotz11c},
$S$ is involutive if and only if $S\cap TP$ is involutive
and $S\cap AG$ is a subalgebroid of $AG$.
Let $P/S$ be \emph{the space of leaves of $S\cap TP$} in 
$P$ and $\pr_\circ$ the projection $\pr_\circ: P\to  P/S$.

For $g,h\in G$, we will write $g\sim_S h$ 
if $g$ and $h$ lie in the same leaf of $S$
and $[g]:=\{h\in G\mid h\sim_S g\}$ for the leaf of $S$ through $g\in G$. 
By the following proposition,
we can use the same notation for the equivalence
relation defined by the foliation by $S\cap TP$ on $P$.
We will write $[p]_\circ$ for the leaf of $S\cap TP$ through $p\in P$.

Note that $g$ and $h\in G$ lie in the same leaf of $S$ if they can 
be joined by finitely many flow curves of vector fields lying in $\mathcal
F_S^\tg$ or $\mathcal F_S^\s$ (see Corollary \ref{accessible-sets}). 
For simplicity, if $g\sim_S h$, we will often assume 
without loss of generality that $g$ and $h$ can be joined by \emph{one}
such integral curve.

\medskip

We will see that if $S$ is complete with some additional properties,
 then the space $G/S$ inherits a groupoid structure over 
$P/S$. If the foliations are simple, we will
get a \emph{Lie} groupoid 
$G/S\rr P/S$. We start by showing that the structure maps $\epsilon$, $\s$, $\tg$, $\mathsf i$ 
\emph{always} induce maps on $G/S$.

\begin{theorem}\label{epsilon}
Let $G\rr P$ be a Lie groupoid and $S\subseteq TG$ an involutive, multiplicative 
subbundle.
\begin{enumerate} 
\item If $p$ and $q$ lie in the same leaf of $S\cap TP$, then
$p$ and $q$ seen as elements of $G$ lie in the same leaf of $S$.
Hence, the map $[\epsilon]:P/S\to G/S$, 
$\pr\circ\epsilon=[\epsilon]\circ\pr_\circ$, is  well-defined.
\item Choose $g, h\in G$ such that $g\sim_S h$. Then 
$\s(g)\sim_S\s(h)$ and $\tg(g)\sim_S\tg(h)$ and the maps
$[\s], [\tg]:G/S\to P/S$ defined by    
$[\s]\circ\pr=\pr_\circ\circ\,\s$, $[\tg]\circ\pr=\pr_\circ\circ\,\tg$ are well-defined.
\item Choose $g, h\in G$ such that $g\sim_S h$. Then 
$g\inv\sim_S h\inv$. Hence, 
$[\mathsf i]:G/S\to G/S$, $[\mathsf i]\circ \pr=\pr\circ \mathsf i$, is well-defined.
\end{enumerate}
\end{theorem}

\begin{proof}
\begin{enumerate}
\item This is a standard fact about foliations that are compatible with submanifolds.
\item If $g$ and $h\in G$ are in the same leaf of $S$, we find 
by Corollary \ref{accessible-sets} smooth $\s$-descending 
vector fields $X_1,\ldots,X_k\in\Gamma(S)$ and $t_1,\ldots,t_k\in\R$ such that
$h=\phi^k_{t_k}\circ\ldots\circ\phi^1_{t_1}(g)$, where 
$\phi^i$ is the flow of the vector field $X_i$ for each $i=1,\ldots,k$.
There exist then smooth vector fields 
$\bar X_1,\ldots, \bar X_k\in\Gamma(S\cap TP)$ such that 
$X_i\sim_\s\bar X_i$, and hence, 
if $\bar \phi^i$ is the flow of the vector field $\bar X_i$,
$\s\circ\phi^i=\bar\phi^i\circ\s$ for $i=1,\ldots,k$.
We compute then 
\[\s(h)=\s\left(\phi^k_{t_k}\circ\ldots\circ\phi^1_{t_1}(g)\right)
=\bar \phi^k_{t_k}\circ\ldots\circ\bar\phi^1_{t_1}(\s(g)),\]
which shows that $\s(h)$ and $\s(g)$ lie in the same leaf of 
$S\cap TP$. The map $[\s]:G/S\to P/S$, 
$[g ]\mapsto[\s(g)]_\circ$ is consequently
well-defined. We show in the same manner, but using this time
the family \eqref{t_accessible}, that 
 $[\tg]:G/S\to P/S$, 
$[g ]\mapsto[\tg(g)]_\circ$ is well-defined (note that we don't use here
the completeness of $\mathcal F_S^\tg$). 
\item If $g\sim_S h$, then there exists without loss of generality one  
smooth section $X\in\Gamma(S)$ and $\sigma\in\R$ such that
$g=\phi^X_\sigma(h)$. 
Since $X(\phi^X_\tau(h))\in S(\phi^X_\tau(h))$
for all $\tau\in [0,\sigma]$, the curve $c:[0,\sigma]\to G$, $c(\tau)=(\phi^X_\tau(h))\inv$ satisfies
$\dot c(\tau)=T_{\phi^X_\tau(h)}\mathsf i \left(X(\phi^X_\tau(h))\right)\in S(\mathsf i(\phi^X_\tau(h)))$
for all $\tau\in[0,\sigma]$. The image  of $c$ lies hence in the leaf of $S$ through $c(0)=h\inv$.
Since $c(\sigma)=g\inv $, we have shown that $h\inv \sim_S g\inv $.
\end{enumerate}
\end{proof}
Hence, we have shown that the structure maps $\epsilon$, $\s$, $\tg$ and $\mathsf i$ 
project to well-defined maps on $P/S$ and $G/S$
and the following 
diagrams commute.
\begin{align*}
\begin{xy}
\xymatrix{
G\ar[r]^\s\ar[d]_{\pr}&P\ar[d]^{\pr_\circ}\\
G/S\ar[r]_{[\s]}&P/S
}
\end{xy}\qquad \qquad
\begin{xy}
\xymatrix{
G\ar[r]^\tg\ar[d]_{\pr}&P\ar[d]^{\pr_\circ}\\
G/S\ar[r]_{[\tg]}&P/S
}
\end{xy}
\end{align*}
\begin{align*}
\begin{xy}
\hspace*{-0.2cm}\xymatrix{
P\ar[r]^\epsilon\ar[d]_{\pr_\circ}&G\ar[d]^{\pr}\\
P/S\ar[r]_{[\epsilon]}&G/S
}
\end{xy}\qquad\qquad
\begin{xy}
\xymatrix{
G\ar[r]^{\mathsf i}\ar[d]_{\pr}&G\ar[d]^{\pr}\\
G/S\ar[r]_{[\mathsf i]}&G/S
}
\end{xy}
\end{align*}

\subsubsection*{\textbf{On the existence of an induced multiplication}}
%  For the multiplication, 
% we will need the following technical lemmas.
% Note that we have not used the completeness condition until here.

Assume that the leaf space $G/S$ has a groupoid structure over $P/S$ such that
the projection $(\pr,\pr_\circ)$ is a groupoid morphism. 
Then:
\begin{equation*}
\begin{array}{c}
\text{ If } g, h \in G \text{ are such that } \s(g)=\tg(h), \\
\text{ then }  [g]\star[h]=[g\star h].
\end{array}
\end{equation*}
Thus, the multiplication 
on $G/S\,\rr\, P/S$ has to be defined as follows.
\begin{equation}\label{multiplication}
\begin{array}{c}
\text{ If } [g],[h]\in G/S \text{ are such that } [\s]([g])=[\tg]([h]), 
\text{ then the product of } [g] \text{  and } [h]\\
\text{ is given by }  [g]\star [h]=[g'\star h],
\text{ where } g'\in G \text{ is such that  } \\
g\sim_Sg' \text{ and } \s(g')=\tg(h). \\
\end{array}
\end{equation}

For this to be well-defined, we will have to assume that the following conditions
are satisfied. 
\begin{align}
\text{For all } g\in G,\, p\in P \text{ such that  } p\sim_S\s(g), \text{ there exists }
h\in G\nonumber\\ 
\text{ such that } h\sim_S g \text{ and } \s(h)=p.\label{condition1}\\
\text{For all } g\in G \text{ and  } p:=\tg(g):
\left([p]\cap\s\inv(p)\right) \star g
= [g]\cap \s\inv(\s(g)).\label{condition}
\end{align}

In Example \ref{example_of_marco} (taken from  \cite[Example 6]{Zambon10}),
 we have a Lie groupoid with a multiplicative foliation such that 
\eqref{condition} is \emph{not} satisfied, and the leaf space does not inherit a multiplication.

\medskip

Condition \eqref{condition} is satisfied for instance if all the intersections
$[g]\cap\tg\inv(\tg(g))$ have one connected component 
and are hence the leaves  of the involutive vector bundle
$T^\tg G\cap S$. This is the case if the Lie groupoid is a Lie group, and also
in Examples \ref{ex-basegp} and \ref{ex-VB}.

We will show later that, under some
(rather strong) regularity conditions on the family 
$\mathcal A^S$, \eqref{condition1} and \eqref{condition} hold
(see Proposition \ref{annoying}).

In the next lemma, we prove that \eqref{condition1} is always satisfied if $S$ is complete.

\begin{lemma}\label{existence_of_gH}
Let $G\rr P$ be a Lie groupoid and $S\subseteq TG$ be a \emph{complete} multiplicative 
involutive subbundle of $TG$. If $g\in G$ and $\tg(g)\sim_S p\in P$,
then there exists $h\in G$ such that $g\sim_S h$ and $\tg(h)=p$.
In the same manner, if $\s(g)\sim_S p\in P$,
then there exists $h\in G$ such that $g\sim_S h$ and $\s(h)=p$.
That is, condition \eqref{condition1} is satisfied.
\end{lemma}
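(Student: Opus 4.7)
The plan is to lift a path witnessing $\tg(g)\sim_S p$ in $P$ up to a path starting at $g$ in $G$, using the completeness hypothesis to ensure the lifts are defined far enough. I will treat the $\tg$-case in detail; the $\s$-case is symmetric.

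First I would unpack the hypothesis. Since $\tg(g)$ and $p$ lie in the same leaf of $S\cap TP$, Corollary~\ref{accessible-sets} applied to the involutive bundle $S\cap TP$ on $P$ (and the fact that by the completeness assumption $S\cap TP$ is spanned by $\bar{\mathcal F}^\tg_S$) produces vector fields $\bar X_1,\ldots,\bar X_k\in\bar{\mathcal F}^\tg_S$ and times $t_1,\ldots,t_k\in\R$ with
\[
p=\phi^{\bar X_k}_{t_k}\circ\cdots\circ\phi^{\bar X_1}_{t_1}(\tg(g)).
\]
By the very definition of $\bar{\mathcal F}^\tg_S$ in \eqref{complete_STP_tg}, for each $i$ there exists a complete section $X_i\in\Gamma(S)$ with $\dom(X_i)=\tg\inv(\dom(\bar X_i))$ and $X_i\sim_\tg\bar X_i$; in particular the flows are intertwined,
\[
\tg\circ\phi^{X_i}_t=\phi^{\bar X_i}_t\circ\tg
\]
on $\dom(X_i)$, for all $t\in\R$.

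Next I would build $h$ by iteratively applying these lifted flows. Set $g_0:=g$ and $g_i:=\phi^{X_i}_{t_i}(g_{i-1})$ for $i=1,\ldots,k$. I need to check inductively that each $g_{i-1}$ lies in $\dom(X_i)=\tg\inv(\dom(\bar X_i))$, so that the next flow step makes sense: by induction $\tg(g_{i-1})=\phi^{\bar X_{i-1}}_{t_{i-1}}\circ\cdots\circ\phi^{\bar X_1}_{t_1}(\tg(g))$, which lies in $\dom(\bar X_i)$ because the base concatenation is defined up through the $k$-th step. Completeness of $X_i$ then guarantees $g_i=\phi^{X_i}_{t_i}(g_{i-1})$ is defined, and since $X_i\in\Gamma(S)$, Corollary~\ref{accessible-sets} gives $g_{i-1}\sim_S g_i$. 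Setting $h:=g_k$ yields $g\sim_S h$, and the intertwining relation iterated $k$ times produces
\[
\tg(h)=\phi^{\bar X_k}_{t_k}\circ\cdots\circ\phi^{\bar X_1}_{t_1}(\tg(g))=p,
\]
as required.

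The only place the argument could fail is at the domain check in the inductive step, and this is exactly what the completeness assumption on $S$ rules out: without it, the lifted flow $\phi^{X_i}_{t_i}$ might not reach $g_{i-1}$ even though the base flow reaches $\tg(g_{i-1})$, or the lifts might not be defined on all of $\tg\inv(\dom(\bar X_i))$. The clauses ``$X$ is complete'' and ``$\dom(X)=\tg\inv(\dom(\bar X))$'' in the definition of $\bar{\mathcal F}^\tg_S$ are tailored precisely so that a base path can be lifted with any prescribed starting point in $\tg\inv$ of its starting point. The $\s$-case is obtained by repeating the argument verbatim with $\bar{\mathcal F}^\s_S$ and \eqref{complete_STP_s} in place of $\bar{\mathcal F}^\tg_S$ and \eqref{complete_STP_tg}, intertwining $\s$ instead of $\tg$.
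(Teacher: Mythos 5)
Your argument is correct and is essentially the paper's own proof: both lift the base flows of spanning fields $\bar X\in\bar{\mathcal F}^\tg_S$ to flows of complete $\tg$-related sections $X\in\Gamma(S)$ and transport $g$ along them, using the intertwining $\tg\circ\phi^{X}_t=\phi^{\bar X}_t\circ\tg$. The only difference is presentational: the paper invokes its standing ``without loss of generality, one integral curve'' convention, whereas you carry out the induction over the $k$-fold concatenation explicitly (including the domain check), which is a harmless and slightly more complete rendering of the same idea.
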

\begin{proof}
Choose a vector field $\bar X\in\bar{\mathcal F}^\tg_S$ and $\sigma\in\R$ such that
$p=\phi_\sigma^{\bar X}(\tg(g))$. We find then a $\tg$-descending vector field 
$X\sim_\tg \bar X$ defined at $g$.
Since $\bar X$ and $X$ can be taken complete by hypothesis,
the integral curve  of $X$ starting at $g$ is defined at $\sigma$ and
 we have  $\tg(\phi^X_\tau(g))=\phi_\tau^{\bar X}(\tg(g))$ for all 
$\tau\in[0,\sigma]$. Set $h:=\phi^X_\sigma(g)$, then 
$h\sim_S g$ and $\tg(h)=\phi_\sigma^{\bar X}(\tg(g))=p$.
\end{proof}

We can now formulate our main theorem and complete its proof. 
\begin{theorem}\label{groupoid_leaf_space}
Let $G\rr P$ be a  Lie groupoid
and $S$ a complete, multiplicative, involutive subbundle 
of $TG$. 
Then there is an induced groupoid structure
on the leaf space $G/S\rr P/S$, such that 
$(\pr,\pr_\circ)$ is a groupoid morphism, if and only if
\eqref{condition} is satisfied.
\end{theorem}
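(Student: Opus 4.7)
The plan is to obtain this theorem as an almost immediate corollary of the preceding one by invoking the abstract quotient construction for normal subgroupoid systems recalled in Section \ref{generalities}. Since the preceding theorem shows that $\mathcal N=(N,\mathcal R_S,\theta)$ is a normal subgroupoid system on $G\rr P$, the general theory directly produces a groupoid structure on the quotient $G/\mathcal S_{\mathcal N}\rr P/\mathcal R_S$ together with a projection from $G\rr P$ onto this quotient that is a morphism of groupoids.

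The first step is then to identify these abstract quotients with the leaf spaces of $S$ and $S\cap TP$. On the base, the equality $P/\mathcal R_S=P/S$ is tautological from the definition $\mathcal R_S=\{(p,q)\in P\times P\mid p\sim_S q\}$. On the total space, I would invoke the preceding proposition, which identifies the orbit of each $gN$ under $\theta$ with the leaf $[g]$ of $S$ through $g$. Since $(h,g)\in \mathcal S_{\mathcal N}$ precisely when $h$ and $g$ share a $\theta$-orbit, this gives $G/\mathcal S_{\mathcal N}=G/S$ and identifies the abstract quotient map with $\pr$.

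Under these identifications, the source, target, unit and inversion of the abstract quotient groupoid must agree with the maps $[\s]$, $[\tg]$, $[\epsilon]$ and $[\mathsf i]$ already produced in Propositions \ref{source,target}, \ref{inversion} and \ref{epsilon}, while the partial multiplication is the one prescribed by the general construction. Hence $(\pr,\pr_\circ)$ is a groupoid morphism. I do not expect any real obstacle in this final step: the substantive content, namely multiplicativity and involutivity of $S$, completeness of the spanning families $\bar{\mathcal F}^\tg_S,\bar{\mathcal F}^\s_S$, and the compatibility condition \eqref{condition}, has already been spent in the preceding theorem on verifying the normal subgroupoid system axioms and in the propositions that precede it on checking that $\s$, $\tg$, $\epsilon$ and $\mathsf i$ descend to $G/S$ and $P/S$. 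What remains is essentially formal bookkeeping matching the abstract construction to the concrete leaf space.
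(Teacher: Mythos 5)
Your proposal matches the paper's approach exactly: the paper states this theorem as an immediate corollary of the preceding result that $\mathcal N=(N,\mathcal R_S,\theta)$ is a normal subgroupoid system, relying on the general quotient construction recalled in Section \ref{generalities} and on the identification of the $\theta$-orbits with the leaves of $S$ established in the proposition before it. Your additional bookkeeping identifying the induced structure maps with $[\s]$, $[\tg]$, $[\epsilon]$ and $[\mathsf i]$ is precisely what the paper records in the remark following the theorem.
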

\begin{remark}
 Note that in general, the induced groupoid $G/S\rr P/S$ is not a Lie groupoid.
As in the easier Lie group case, we get a Lie groupoid if the 
involutive subbundles $S\subseteq TG$ and $S\cap TP\subseteq TP$ are simple.

Note that if a \emph{Lie group} $G$ with Lie algebra $\lie g$ is simply connected, then 
any ideal $\lie s\subseteq \lie g$  integrates automatically to a closed, normal subgroup (\cite{HiNe91}).
Thus, the space of leaves of a multiplicative subbundle $S:=\lie s^L=\lie s^R\subseteq TG$
is always a \emph{Lie} group. In the general case of a $\tg$-simply connected Lie groupoid, 
it is easy to see that
  the leaf  space of a multiplicative foliation is not necessarily a manifold. Take for instance
any manifold as a groupoid over itself  (and hence $\tg$-simply connected) and any non-simple foliation
 (that is automatically multiplicative) on the manifold.
\end{remark}

\begin{proof}[Proof of Theorem \ref{groupoid_leaf_space}]
Assume that
 $G/S\,\rr\, P/S$ is a groupoid such that $(\pr,\pr_\circ)$ is a groupoid morphism and choose $g\in G$.
If $h\in [g]\cap\s\inv(\s(g))$, then 
we have $[h\star g\inv]=[h]\star[g\inv]=[g]\star[g\inv]
=[\tg(g)]$
and hence 
\[h=(h\star g\inv)\star g\in ( [\tg(g)]\cap \s\inv(\tg(g)))\star g.\]
The converse inclusion in \eqref{condition} can be checked in the same manner.

\medskip

Conversely, assume 
that  \eqref{condition} holds.
We show that the multiplication \eqref{multiplication} is well-defined, i.e., that is doesn't depend 
on the choice of the representatives $g,h$.
Using Theorem \ref{epsilon}, the groupoid axioms are then easily verified to hold
and the projection $(\pr,\pr_\circ)$ is a groupoid morphism by definition of the structure maps 
of $G/S\,\rr\, P/S$.

Choose $[g]$ and $[h]\in G/S$ such that $[\s]([g])=[\tg]([h])$. Then 
we have $[\s(g)]=[\tg(h)]$ by definition of $[\s]$ and $[\tg]$ and,  by
Lemma \ref{existence_of_gH}, there 
exists $g'\in [g]$ such that $\s(g')=\tg(h)$. For simplicity, assume that 
the chosen representative $g$ already satisfies this condition. 

Assume that $g,g'$ are two such choices, i.e., $g'\in[g]$ and $\s(g')=\tg(h)=\s(g)$.
By \eqref{condition}, we have
\[[g\star h]\cap \s\inv(\s(h))= \left([\tg(g)]\cap\s\inv(\tg(g))\right) \star (g\star h)
= \left([g]\cap \s\inv(\s(g))\right)\star h,
\]
and hence
$g\star h \sim_S g'\star h$. That is, 
\eqref{multiplication} doesn't depend on the choice of the representative $g$
of $[g]$. 
By symmetry, \eqref{multiplication} doesn't depend on the choice of $h$.
\end{proof}

\begin{example}\label{example_of_henrique}
Consider a $\tg$-connected Lie groupoid $G\rr P$
and let a connected Lie group $H$ act freely and properly on $G\rr P$
  by Lie groupoid 
morphisms. Let $\Phi:H\times G\to G$ be the action. 
That is, for all $h\in H$, 
the map $\Phi_h:G\to G$ is a groupoid morphism
over the map $\phi_h:=\Phi_h\an{P}:P\to P$.
Let $\V\subseteq TG$ be the vertical space 
of the action, i.e., 
$\V(g)=\{\xi_G(g)\mid \xi\in\lie h\}$
for all $g\in G$, where $\lie h$ is the Lie algebra of $H$.

We check that $\V\subseteq TG$ is multiplicative.
Choose $\xi_G(g)\in \V(g)$. Then we have 
\begin{align*}
T_g\tg\left(\xi_G(g)\right)&=\left.\frac{d}{dt}\right\an{t=0}\tg(\Phi_{\exp(t\xi)}(g))
=\left.\frac{d}{dt}\right\an{t=0}\phi_{\exp(t\xi)}(\tg(g))\\
&=\xi_P(\tg(g))\quad \in\quad\V(\tg(g))\cap T_{\tg(g)}P
\end{align*}
and in the same manner
$$T_g\s\left(\xi_G(g)\right)\in\V(\s(g))\cap T_{\s(g)}P.$$
This shows  that $\V\cap T^\s G=\V\cap T^\tg G=\{0\}$
in this example.

If $\xi_G(g)\in\V(g)$ and $\eta_G(g')\in\V(g')$
are such that 
$$T_g\s\left(\xi_G(g)\right)=T_g\tg\left(\eta_G(g')\right),$$
then we have $\s(g)=\tg(g')=:p$ and 
$$\xi_P(p)=\eta_P(p),$$
which implies $\xi=\eta$ since the action is free. We get then
\begin{align*}\xi_G(g)\star \eta_G(g')&=\xi_G(g)\star \xi_G(g')
=\left.\frac{d}{dt}\right\an{t=0}\Phi_{\exp(t\xi)}(g)\star \Phi_{\exp(t\xi)}(g')\\
&=\left.\frac{d}{dt}\right\an{t=0}\Phi_{\exp(t\xi)}(g\star g')
=\xi_G(g\star g')\in\V(g\star g').
\end{align*}
The inverse of $\xi_G(g)$ is then $\xi_G(g\inv)$ for all $\xi\in\lie h$ and
$g\in G$. 

Choose $g\in G$. Then we have 
\[[g]\cap\s\inv(\s(g))=\{g'\in G\mid \s(g)=\s(g'),\, \exists\, x\in H: x\cdot g=g'\}=\{g\},
\]
since $x\cdot g=g'$ implies $x\cdot\s(g)=\s(g')=\s(g)$ and hence 
$x=e$ because the action of $H$ on $P$ is assumed to be free. As a consequence, Condition \eqref{condition}
\[[g]\cap\s\inv(\s(g))=\left([\tg(g)]\cap\s\inv(\tg(g))\right)\star g
\]
is satisfied in a trivial manner.

It follows also from the considerations above that the vector fields $\xi_G$ are $\tg$-
and $\s$-descending 
to $\xi_P$. Hence, $\V$ is spanned by complete vector fields, that are $\tg$-related
to complete vector fields.

Hence, we recover from Theorem \ref{groupoid_leaf_space}
the fact that
the quotient $G/H\rr P/H$ has the structure of a Lie groupoid
such that the projections 
$\pr:G\to G/H$, $\pr_\circ:P\to P/H$
form a Lie groupoid morphism. 
\end{example}

\begin{remark}\label{geom_mech_remark}
In discrete mechanics, the velocity space $TQ$ of a configuration space 
$Q$ is replaced by $Q\times Q$.
The lift of a Lie group action by a Lie group $H$ on the configuration space is then naturally 
replaced by the diagonal action of $H$ on $Q\times Q$.
 It is observed by 
\cite{Weinstein96} that the Lagrangian formalism
for discrete mechanics can be generalized using groupoids.
This idea has been developped by \cite{MaMaMa06} and \cite{IgMaMaMa08}. In this setting, 
Lie group actions preserving the geometric structure should be 
Lie group actions by Lie groupoid morphisms as in the previous example.
This suggests that Theorem \ref{groupoid_leaf_space} could be  very useful
for reduction by distributions in the context of discrete mechanics and discrete mechanics 
on Lie groupoids.
\end{remark}

\bigskip
We show now that under
 additional conditions on the family $\mathcal A^S$, 
\eqref{condition1} and \eqref{condition} are satisfied if the 
 Lie groupoid is $\tg$-connected.
We will assume that:
\begin{itemize}
\item The elements of $\mathcal A^S$ are globally defined. 
\item The images of the elements of $\mathcal A^S$ under the anchor are complete.
This implies that the corresponding left invariant vector fields 
on $G$ are also complete (see \cite[Theorem 3.6.4]{Mackenzie05}). 
\end{itemize}
\begin{proposition}\label{annoying}
 Let $G\rr P$ be a $\tg$-connected Lie groupoid and $S\subseteq TG$ an  involutive, multiplicative 
subbundle. Assume that $\mathcal A^S$ is a family of \emph{global} sections of $AG$, with \emph{complete} 
images under the anchor. 
Then \eqref{condition1} and  \eqref{condition} are satisfied and there is hence an induced groupoid structure
on the leaf space $G/S\rr P/S$, such that 
$(\pr,\pr_\circ)$ is a groupoid morphism.
\end{proposition}

\begin{proof}
Since $AG$ is spanned by $\mathcal A^S$ and 
 $G\rr P$ is $\tg$-connected, 
each element $h$ of $G$ can  be written 
\[ h=R_{\Exp(t_1a_1)}\circ \ldots\circ R_{\Exp(t_na_n)}(\tg(h))\]
with $n\in\N$, $a_1,\ldots,a_n\in\mathcal A^S$ and $t_1,\ldots,t_n\in \R$.
Without loss of generality, we can assume in the following that $n=1$ and we set $a_1=:a$, $t_1=:t$.
That is, $h=\Exp(ta)(\tg(h))$. By hypothesis, for all $t\in \R$, the bisection $\Exp(ta)$ is globally
defined on $P$.

Equation \eqref{very_useful} implies that the flow of $a^l$, which is 
equal to $R_{\Exp(\cdot a)}$, leaves $S$ invariant. As a consequence,
if $g'\sim_S g$, then 
$ R_{\Exp(\tau a)}(g')\sim_S  R_{\Exp(\tau a)}(g)$ for all $\tau$.

In particular,  if $q\in P$ is  $S\cap TP$-related to $\tg(h)$, then 
$h=R_{\Exp(t a)}(\tg(h))\sim_S  R_{\Exp(t a)}(q)=\Exp(ta)(q)$
and $\Exp(ta)(q)$ is such that $\tg\left(\Exp(ta)(q)\right)=q$. This implies \eqref{condition1}, by using the inversion.

Furthermore, if $\s(g')=\s(g)=\tg(h)$, we get 
$g'\star h=R_h(g')= R_{\Exp(t a)}(g')\sim_S R_{\Exp(t a)}(g)=R_h(g)=g\star h$.
\end{proof}

We end this paragraph with an example of a multiplicative foliation which leaf space is not a groupoid.

\begin{example}[\cite{Zambon10}]\label{example_of_marco}
Let $P$ be the trivial circle bundle over the open $2$-disk $D$, but with one
point removed in the fiber over $0 \in D$. We write $P= \hat\sfe^1\to D$ , where
$\hat\sfe^1_p$ denotes the circle for all non-zero $p \in D$, while 
$\hat\sfe^1_0$ is the circle with a point removed.
 Note that $\pi_1(P) = \mathbb Z$, generated by any of the circle fibers. It is easy to
see that the universal cover of $P$ is $\tilde P = (D \times \R)\setminus(\{0\} \times \mathbb Z)$. 
We write $\tilde P = \hat \R\to D$ as a bundle over $D$, where $\hat\R_p =\R$ for non-zero $p \in D$
and $\hat \R_0 = \R\setminus\mathbb Z$.

Consider the Lie groupoid $G:=\hat\R \times_{\mathbb Z} \hat \R\,\rr\, P$, where 
the action of $\mathbb Z=\pi_1(P)$ is the diagonal action by deck transformations.
Let $\pi$ be the projection $\hat\R \times\hat \R \to \hat\R \times_{\mathbb Z} \hat \R$, 
$(p_1,x_1,p_2,x_2)$ coordinates on $\hat\R \times \hat\R$ and $(p,x)$ coordinates on $P$. The subbundle 
of $T(\hat\R \times \hat\R)$ spanned by $\partial_{x_1}$ and $\partial_{x_2}$
is easily seen to project under $\pi$ to a multiplicative subbundle $S$  of $TG$.
The intersection $S_P:=S\cap TP$ 
is the span of $\partial_x$ and we find $P/S_P\simeq D$.

The leaves of $S$ almost coincide with the
fibers of the natural projection onto $D \times D$: for $(p_1,p_2)\in D\times D$, the leaf $L_{[p_1,x_1,p_2,x_2]}$
of $S$ through $g=[p_1,x_1,p_2,x_2]\in G$ is:
\begin{equation*}
\left\{\begin{array}{l}
 L_{g}=\{[(p_1 ,t_1 , p_2 ,t_2 )] \mid t_1, t_2 \in \R\} 
\quad \text{  if }p_1\neq 0 \text{ and } p_2\neq 0,\\
 L_{g}=\{[(p_1 ,t_1 , p_2 ,t_2 )] \mid t_1\in\R\setminus\mathbb Z, t_2 \in \R\} 
\quad\text{  if }p_1=0 \text{ and } p_2\neq 0,\\
L_{g}=\{[(p_1 ,t_1 , p_2 ,t_2 )] \mid t_1\in \R, t_2 \in \R\setminus\mathbb Z\} 
\quad\text{  if }p_1\neq 0 \text{ and } p_2=0,\\
L_{g}=\{[(p_1 ,t_1 , p_2 ,t_2 )] \mid t_1\in ([x_1],[x_1]+1), t_2 \in ([x_2],[x_2]+1)\} 
\quad\text{  if }p_1=p_2=0.
\end{array}\right.
\end{equation*}
Thereby, for $x\in\R$, $[x]$ is the biggest integer smaller than $x$.
In other words, the leaf through $[p_1,x_1,p_2,x_2]$ is a cylinder
if $p_1\neq 0$  and  $p_2\neq 0$, a rectangle 
if either $p_1$ or $p_2$ vanishes, and, 
 over $(0, 0)\in D \times D$, we
have the quotient of $(\R \setminus\mathbb Z) \times (\R \setminus \mathbb Z)$
 by the diagonal $\mathbb Z$-action, which consists of
countably many leaves. Hence the leaf space is $\widehat{D\times D}$,
where the latter denotes the non-Hausdorff manifold obtained from $D \times D$ replacing
$(0, 0)$ with a copy of $\mathbb Z$.

Is it easy to see that, as shown in Theorem \ref{epsilon}, there are
 well-defined ``source'' and ``target''
 maps $\widehat{D\times D}\to D$ such that 
\begin{align*}
\begin{xy}
\xymatrix{
G=\hat\R \times_{\mathbb Z} \hat\R\ar[r]^{\pr}
\ar@<.6ex>[d]^\s\ar@<-.6ex>[d]_\tg& \widehat{D\times D}\ar@<.6ex>[d]^{[\s]}\ar@<-.6ex>[d]_{[\tg]}\\
P=\hat\sfe\ar[r]_{\pr_\circ}&D }
\end{xy}
\end{align*}
commutes, but the multiplication
does not descend to the quotient.
 Indeed, consider $(0, p) \in \widehat{D\times D}$ where $p$ is
non-zero. A preimage under $\pr$ is $[(0, \mu_1 ), (p, x )]$ where  $\mu_1 \in\R\setminus\mathbb Z$ and $x\in \R$
are arbitrary. Similarly, we consider $(p, 0)\in\widehat{D\times D}$ and as a preimage we pick
$[(p, x ), (0, \mu_2)]$ where again  $\mu_2 \in\R\setminus\mathbb Z$ is arbitrary. Now multiplying these two
elements of $G$ we obtain $[(0, \mu_1 ), (0, \mu_2)]$. The value of its 
projection under $\pr$ depends on the concrete choice of $\mu_1$ and $\mu_2$. This shows that
$\widehat{D\times D}$ does not have an induced groupoid structure. In the same manner, one can see that \eqref{condition}
is not satisfied.

\medskip

The Lie algebroid of $G\rr P$ is the projection under $T\pi$ of the restriction to $\Delta_{\hat\R}$ of the subbundle 
$\{0\}\times T\hat\R$ of $T(\hat\R\times\hat\R)=T\hat\R\times T\hat\R$.
The vector fields on the second copy of $D$ and $\partial_{x_2}$ 
are $\pi$-related in this manner to $\nabla$-parallel sections of $AG$.
It is easy to see using points as above that the conditions for Proposition
\ref{annoying} are not satisfied here.
\end{example}

\bigskip
\subsubsection*{\textbf{Tangent and cotangent spaces of the quotient}}
We end this section with the  following lemma, which will be useful for the proof
of Theorem \ref{main}. The proof is a straightforward computation that is left to the reader.
\begin{lemma}\label{tangent_cotangent_structures}
In the setting of Theorem \ref{groupoid_leaf_space}, if 
$G/S\rr P/S$ is a Lie groupoid and 
$(\pr,\pr_\circ)$ a pair of smooth surjective submersions, choose
$v_{[g]}\in T_{[g]}(G/S)$
and $v_{[h]}\in T_{[h]}(G/S)$  such that 
$T_{[g]}[\s] v_{[g]}=T_{[h]}[\tg] v_{[h]}$.
Then we can assume without loss of generality that $\s(g)=\tg(h)$.
If
$v_g\in T_gG$ and $v_h\in T_hG$ are such that 
$T_g\pr v_g=v_{[g]}$ and $T_h\pr v_h=v_{[h]}$, then there exists
$w_g\in S(g)$ such that $T_g\s(v_g-w_g)=T_h\tg v_h$.
We have then $T_{g\star h}\pr((v_g-w_g)\star v_h)=v_{[g]}\star v_{[h]}$.

If $\alpha_{[g]}\in T_{[g]}^*(G/S)$
and $\alpha_{[h]}\in T_{[h]}^*(G/S)$  are  such that 
$\hat{[\s]}(\alpha_{[g]})=\hat{[\tg]}(\alpha_{[h]})$, then 
$$\hat\s((T_g\pr)^*\alpha_{[g]})=(T_{\s(g)}\pr)^*\hat\s(\alpha_{[g]}), \qquad 
\hat\tg((T_h\pr)^*\alpha_{[h]})=(T_{\tg(h)}\pr)^*\hat\tg(\alpha_{[h]}),$$
hence 
$\hat\s\left((T_g\pr)^*\alpha_{[g]}\right)
=\hat\tg\left((T_h\pr)^*\alpha_{[h]}\right)$
and we have 
$$((T_g\pr)^*\alpha_{[g]})\star( (T_h\pr)^*\alpha_{[h]})
=(T_{g\star h}\pr)^*(\alpha_{[g]}\star \alpha_{[h]}).$$
\end{lemma}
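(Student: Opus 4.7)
The statement bundles two independent claims: one on tangent vectors, one on covectors. My plan is to prove them separately, in each case reducing to tools that are already in hand — Lemma \ref{existence_of_gH}, Corollary \ref{surjectivity}, and the fact (Theorem \ref{groupoid_leaf_space}) that $(\pr,\pr_\circ)$ is a groupoid morphism so that $T\pr$ and $(T\pr)^*$ intertwine the tangent/cotangent groupoid structures on $G$ and $G/S$.

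For the tangent part I would proceed as follows. The equality $T_{[g]}[\s]v_{[g]}=T_{[h]}[\tg]v_{[h]}$ forces $[\s(g)]_\circ=[\tg(h)]_\circ$, so Lemma \ref{existence_of_gH} supplies $h'\sim_Sh$ with $\tg(h')=\s(g)$; replacing $h$ by $h'$ does not change $[h]$ or $v_{[h]}$ (one only has to pick a new lift $v_{h'}$ of $v_{[h]}$), and this is the desired reduction. Setting $p=\s(g)=\tg(h)$, I would then apply $T\pr_\circ$ to $T_g\s\,v_g-T_h\tg\,v_h\in T_pP$: using $[\s]\circ\pr=\pr_\circ\circ\s$ and $[\tg]\circ\pr=\pr_\circ\circ\tg$, the image vanishes, so the difference lies in $S(p)\cap T_pP$. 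Corollary \ref{surjectivity} then produces $w_g\in S(g)$ with $T_g\s\,w_g=T_g\s\,v_g-T_h\tg\,v_h$, which is exactly what we need to make $(v_g-w_g)\star v_h$ a legitimate element of $T_{g\star h}G$. Finally, because $\pr$ is a groupoid morphism and $w_g\in S(g)=\ker(T_g\pr)$, applying $T\pr$ to this product gives $T_g\pr\,v_g\star T_h\pr\,v_h=v_{[g]}\star v_{[h]}$.

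For the cotangent part I would expand both sides by the explicit formulas for $\hat\s$ and $\hat\tg$ recalled in the introduction. Given $u_{\s(g)}\in A_{\s(g)}G$, one computes $\hat\s((T_g\pr)^*\alpha_{[g]})(u_{\s(g)})=\alpha_{[g]}(T_g\pr(0_g\star u_{\s(g)}))$; since $\pr$ is a groupoid morphism, $T_g\pr(0_g\star u_{\s(g)})=0_{[g]}\star T_{\s(g)}\pr\,u_{\s(g)}$, and the right-hand side is exactly $\hat{[\s]}(\alpha_{[g]})(T_{\s(g)}\pr\,u_{\s(g)})=(T_{\s(g)}\pr)^*\hat{[\s]}(\alpha_{[g]})(u_{\s(g)})$. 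The target identity is symmetric. For the product formula, the hypothesis $\hat{[\s]}(\alpha_{[g]})=\hat{[\tg]}(\alpha_{[h]})$ and the two identities just proved yield $\hat\s((T_g\pr)^*\alpha_{[g]})=\hat\tg((T_h\pr)^*\alpha_{[h]})$, so the left-hand product is defined; testing both sides on any composable pair $v_g\star v_h$ and using linearity of the cotangent multiplication together with $T_{g\star h}\pr(v_g\star v_h)=T_g\pr\,v_g\star T_h\pr\,v_h$ gives the claimed equality.

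The main obstacle, as far as I can see, is purely bookkeeping: one must check that the ``without loss of generality'' reduction $\s(g)=\tg(h)$ in the tangent part is genuinely harmless, i.e.\ that the class $v_{[g]}\star v_{[h]}$ computed via the representative $h'$ does not depend on the choice of $h'$ or of the lifts $v_g,v_h,w_g$. This follows from $\pr$ being a smooth morphism with kernel $S$: any other admissible choices differ by vectors in $S$, which are killed by $T\pr$. Once this is absorbed, both parts reduce to short formal manipulations using only the definitions of $T\m$, $\hat\s$, $\hat\tg$ and the morphism property of $(\pr,\pr_\circ)$.
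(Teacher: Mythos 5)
Your proposal is correct and follows essentially the same route as the paper: the reduction to $\s(g)=\tg(h)$ via Lemma \ref{existence_of_gH}, the application of $T\pr_\circ$ to $T_g\s\,v_g-T_h\tg\,v_h$ followed by Corollary \ref{surjectivity} to produce $w_g$, and the pointwise expansion of $\hat\s$, $\hat\tg$ and the cotangent product using the morphism property of $(\pr,\pr_\circ)$ are exactly the steps in the text preceding the lemma. Your closing remark on the independence of the choices of $h'$ and of the lifts is a harmless addition that the paper leaves implicit.
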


\section{An application} 
We use Theorem \ref{groupoid_leaf_space}
to study how a theorem in \cite{Ortiz08} (see also \cite{Jotz11a})
about the shape of \emph{Dirac Lie groups}
generalizes to Dirac groupoids.

\bigskip

The Pontryagin bundle $\mathsf{P}_M=TM
\times_M T^* M$ of a smooth manifold $M$ is
endowed with the  non-degenerate symmetric fiberwise 
bilinear form of signature\linebreak $(\dim M, \dim M)$ given by
\begin{equation}
\langle(v_m,\alpha_m), (w_m,\beta_m)\rangle=\alpha_m(w_m)+\beta_m(v_m)
\label{sym_bracket}
\end{equation}
for all $m\in M$, $v_m,w_m\in T_mM$ and $\alpha_m,\beta_m\in T_m^*M$. An
\emph{almost Dirac structure} (see \cite{Courant90a}) on $M $ is a Lagrangian vector 
subbundle $\mathsf{D} \subset \mathsf{P}_M $. That is, $ \mathsf{D}$ coincides with its
orthogonal relative to \eqref{sym_bracket} and so its fibers are necessarily $\dim M $-dimensional.

\medskip

Let $(M,\mathsf D)$ be an almost Dirac manifold. 
For each $m\in M$, the almost Dirac structure $\mathsf{D}$ 
defines a subspace
$\mathsf{G_0}(m)\subset T_mM $ by 
\[
\mathsf{G_0}(m):= \{v_m \in T_mM \mid (v_m, 0)\in\mathsf D(m) \}.\]
The distribution $\mathsf{G_0}=\cup_{m\in M}\mathsf{G_0}(m)$
is not necessarily smooth.

The almost Dirac structure $\mathsf D$ is 
\emph{a Dirac structure} if its set of sections is closed under the
 \emph{Courant-Dorfman bracket}, i.e.,
\begin{equation}\label{Courant_bracket}
[(X, \alpha), (Y, \beta) ]  
= \left( [X, Y],  \boldsymbol{\pounds}_{X} \beta - \ip{Y} \dr\alpha \right)
\in\Gamma(\mathsf D)
\end{equation} 
for all $(X,\alpha), (Y,\beta)\in\Gamma(\mathsf D)$.
The class of
Dirac structures generalizes the one of Poisson manifolds in the sense
of the following example.
\begin{example}\label{exPoisson}
Let $M$ be a smooth manifold endowed with a globally defined bivector field
$\pi\in\Gamma\left(\bigwedge^2 TM\right)$.  Then  $\mathsf
D_\pi\subseteq \mathsf P_M$ defined by
\[\mathsf{D}_\pi(m)=\left\{(\pi^\sharp(\alpha_m),\alpha_m)\mid
  \alpha_m\in T_m^*M \right\}\quad \text{ for all }\, m\in M,
\]
where $\pi^\sharp:T^*M\to TM$ is defined by 
$\pi^\sharp(\alpha)=\pi(\alpha,\cdot)\in\mx(M)$ for all
$\alpha\in\Omega^1(M)$,
is an almost Dirac structure on $M$. It is a Dirac structure if and only if the Schouten bracket
of the bivector field with itself 
vanishes,
$[\pi,\pi]=0$, that is, if and only if $(M,\pi)$ is a Poisson manifold.
\end{example}

Let $(M, \mathsf D_M)$ and $(N, \mathsf D_N)$ be
two (almost) Dirac manifolds and $\Phi:M\to N$ a smooth map.
Then $\Phi$ is a \emph{forward Dirac map}
if for all  $n\in N$, $m\in \Phi\inv(n)$ and 
$(v_n, \alpha_n)\in\mathsf D_N(n)$ 
there exists $(v_m, \alpha_m)\in\mathsf D_M(m)$
such that $T_m\Phi v_m=v_n$ and $\alpha_m=(T_m\Phi)^*\alpha_n$.

\medskip

Let $G\rr P$ be a Lie groupoid. Recall that the Pontryagin  bundle of $G$ has then a Lie groupoid
structure over $TP\times_PA^*G$.
\begin{definition}[\cite{Ortiz08t}]
An (almost) Dirac  groupoid is a Lie groupoid $G\rightrightarrows P$
endowed with an (almost) Dirac structure $\mathsf D_G$ such that
$\mathsf D_G\subseteq TG\times_G T^*G$ is a Lie subgroupoid.
The (almost) Dirac structure is then said to be \emph{multiplicative}.
\end{definition}

If $G$ is a Lie group, i.e., with $P=\{e\}$, an (almost) Dirac structure $\mathsf D_G$
on $G$ is multiplicative if the multiplication 
$\mathsf m: (G\times G, \mathsf D_G\oplus\mathsf D_G)\to (G,\mathsf D_G)$
is a forward Dirac map. It is shown in \cite{Ortiz08}, \cite{Jotz11a}, 
that the induced distribution $\mathsf{G_0}$
on $G$ is multiplicative, hence left and right invariant and 
equal to $\mathsf{G_0}=\lie g_0^L=\lie g_0^R$ for some 
ideal $\lie g_0\subseteq \lie g$ in the Lie algebra $\lie g$
of $G$. Thus, $\mathsf{G_0}$ is  automatically an involutive subbundle of $TG$ for any
 multiplicative almost Dirac structure on the Lie group $G$. The leaf $N$ of $\mathsf{G_0}$
through $e$ is then a normal subgroup of $G$ and if it is closed, 
the quotient $G/\mathsf{G_0}=G/N$ is a Lie group
such that the projection 
$q:G\to G/N$ is a smooth surjective submersion. If 
$(G,\mathsf D_G)$ is a Dirac Lie group and $N\subseteq G$
is a closed subgroup, 
the quotient $G/N$ inherits a Poisson structure 
$\pi$ such that 
$q:(G,\mathsf D_G)\to (G/N,\pi)$ is a forward Dirac map and $(G/N,\pi)$ is a \emph{Poisson Lie group}.
Furthermore, $  (G/N,\pi)$ is a Poisson homogeneous space of the 
Dirac Lie group $(G,\mathsf D_G)$ (\cite{Jotz11a}).

\medskip

The fact that the characteristic distribution of an almost Dirac Lie group
is always multiplicative is a special case of the fact that the 
 characteristic distribution of an almost Dirac groupoid is always multiplicative, as shows the next proposition.
\begin{proposition}\label{G_0_multiplicative}
Let $(G\rr P,\mathsf D_G)$ be an (almost) Dirac groupoid. Then the
subbundle $\mathsf{G_0}\subseteq TG$ is a (set) subgroupoid over $TP\cap\mathsf{G_0}$. 
\end{proposition}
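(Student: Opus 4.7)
The plan is to exploit the fact that $\mathsf D_G$ is by hypothesis a subgroupoid of the Pontryagin groupoid $\mathsf P_G = TG\times_G T^*G \,\rr\, TP\times_P A^*G$, and to observe that the zero section of $T^*G$ behaves transparently under all the structure maps of the cotangent groupoid. Indeed, $v_g\in \mathsf{G_0}(g)$ if and only if $(v_g, 0_g)\in\mathsf D_G(g)$, so it suffices to check that $\{(v_g,0_g)\mid v_g\in \mathsf{G_0}(g)\}$ is a subgroupoid of $\mathsf P_G$, and then project onto the first factor.

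First I would verify the behaviour of the zero covector: from the formulas recalled in the preliminaries, $\hat\s(0_g)(u_{\s(g)})=0_g(T_{\s(g)}L_g u_{\s(g)})=0$ and similarly $\hat\tg(0_g)=0_{\tg(g)}$, and the multiplication formula $(\alpha_g\star\alpha_h)(v_g\star v_h)=\alpha_g(v_g)+\alpha_h(v_h)$ immediately gives $0_g\star 0_h=0_{gh}$; the units of the cotangent groupoid at $0_p\in A^*_pG$ are $0_p\in T_p^*G$, and the cotangent inverse of $0_g$ is $0_{g\inv}$. These observations match up cleanly with the tangent groupoid structure on $TG$ applied to $v_g$.

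Now one runs through the groupoid axioms for $\mathsf{G_0}$. For source and target: if $(v_g,0_g)\in\mathsf D_G(g)$, then $\TT\s(v_g,0_g)=(T_g\s\,v_g, 0_{\s(g)})$ must be a unit of the subgroupoid $\mathsf D_G$, i.e.\ it lies in $\mathsf D_G\cap(TP\times_P A^*G)$, so $T_g\s\,v_g\in \mathsf{G_0}(\s(g))\cap T_{\s(g)}P$, and analogously for $\tg$. For composability: given $v_g, v_h\in \mathsf{G_0}$ with $T_g\s\,v_g=T_h\tg\,v_h$, the pairs $(v_g,0_g)$ and $(v_h,0_h)$ are composable in $\mathsf P_G$ (the cotangent compatibility $\hat\s(0_g)=0=\hat\tg(0_h)$ being automatic), hence their product $(v_g\star v_h,\,0_g\star 0_h)=(v_g\star v_h,0_{gh})$ lies in $\mathsf D_G$, giving $v_g\star v_h\in \mathsf{G_0}(gh)$. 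The unit at a point $v_p\in \mathsf{G_0}\cap T_pP$ is $v_p$ itself, and stability under inversion follows from $\TT\iota(v_g,0_g)=(T\mathsf i\,v_g, 0_{g\inv})\in\mathsf D_G$, i.e.\ $T\mathsf i\,v_g\in \mathsf{G_0}(g\inv)$. The unit set of $\mathsf{G_0}$ as a subgroupoid of $TG\rr TP$ is then exactly $\mathsf{G_0}\cap TP$.

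There is no real obstacle here: the whole argument is a mechanical transfer of the groupoid subobject property from $\mathsf D_G\subseteq\mathsf P_G$ to its first-component projection, and the only thing one needs to check carefully is that the zero covector is preserved by every structure map of the cotangent groupoid, which the explicit formulas make obvious.
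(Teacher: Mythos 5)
Your proposal is correct and follows essentially the same route as the paper: identify $v_g\in\mathsf{G_0}(g)$ with $(v_g,0_g)\in\mathsf D_G(g)$, use that $\mathsf D_G$ is a subgroupoid of $TG\times_G T^*G$, and observe that the zero covector is preserved by all structure maps of the cotangent groupoid. The paper's proof is merely a more compressed version of the same computation.
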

\begin{proof}
Choose a composable pair $(g,h)\in G\times_PG$ and $v_g\in\mathsf{G_0}(g)$, 
$v_h\in\mathsf{G_0}(h)$ such that 
$T_g\s(v_g)=T_h\tg(v_h)$. Then we have $(v_g, 0_g)\in\mathsf D_G(g)$, 
$(v_h, 0_h)\in\mathsf D_G(h)$
such that 
$\TT\s(v_g,0_g)=(T_g\s(v_g),0_{\s(g)})=(T_g\tg(v_h),0_{\tg(h)})=\TT\tg(v_h,0_h)$
 and hence 
$\TT\tg(v_g,0_g)\in\mathsf D_G(\tg(g))$, 
$\TT\s(v_g, 0_g)\in\mathsf D_G(\s(g))$,
$(v_g,0_g)\inv\in\mathsf D_G(g\inv)$
and $(v_g,0_g)\star(v_h,0_h)\in\mathsf D_G(g\star h)$.
Since $(v_g,0_g)\inv=(v_g\inv,0_{g\inv})$
and $(v_g,0_g)\star(v_h,0_h)=(v_g\star v_h, 0_{g\star h})$, this shows that
$T_g\s(v_g)\in\mathsf{G_0}(\s(g))$, 
$T_g\tg(v_g)\in\mathsf{G_0}(\tg(g))$,
$v_g\inv\in\mathsf{G_0}(g\inv)$ and 
$v_g\star v_h\in\mathsf{G_0}(g\star h)$.
\end{proof}

Yet, the situation in the general case of an (almost) Dirac groupoid
is more involved than in the Lie groups case. First of all, the induced distribution $\mathsf{G_0}$ has not automatically 
constant rank on $G$ anymore. If it is smooth, it is in general not completely integrable in the sense
of Stefan and Sussmann, unless  $\mathsf D_G$ is a Dirac structure.
For instance, each manifold 
can be seen as a (trivial) groupoid over itself 
(i.e., with $\tg=\s=\Id_M$) and any (almost) Dirac manifold 
can thus be seen as an (almost) Dirac groupoid, which will, 
in general not satisfy these conditions. 
Thus,  trivial Dirac groupoids
yield already many examples of Dirac groupoids for which $\mathsf{G_0}$
is not an involutive subbundle of $TG$.
(The class of Dirac groupoids described in the next theorem
seems hence to be a small class of examples.)

If $\mathsf{G_0}$ associated to a
\emph{Dirac groupoid} $(G\rr P, \mathsf D_G)$ is \emph{assumed} to be a vector bundle on $G$, 
then we are in the same situation as in the group case.

We know by Theorem \ref{groupoid_leaf_space} that 
we have to assume that $\mathsf{G_0}$ is \emph{complete} and satisfies
 \eqref{condition} to ensure that the leaf space $G/\mathsf{G_0}$ 
inherits a multiplication map and hence to show that there  exists a Poisson \emph{groupoid} structure 
on the quotient $G/\mathsf{G_0}$.

The next theorem shows that, under these
conditions, we obtain the same result as the one that is 
already known in the group case.

\begin{theorem}\label{main}
Let $(G\rr P, \mathsf D_G)$ be a Dirac groupoid. 
Assume 
that $\mathsf{G_0}$ is a  subbundle of $TG$, that it is complete and that \eqref{condition} 
holds.
If the leaf spaces $G/\mathsf{G_0}$ and 
$P/\mathsf{G_0}$
have smooth manifold structures 
such that the projections are submersions,
then there is an induced multiplicative   Poisson  structure
on the Lie groupoid $G/\mathsf{G_0}\rightrightarrows P/\mathsf{G_0}$, such that
the projection $\pr:G\to G/\mathsf{G_0}$ is a forward Dirac map.
\end{theorem}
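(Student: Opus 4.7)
The plan is to combine Theorem \ref{groupoid_leaf_space} with the standard reduction of an integrable Dirac structure by its characteristic distribution, and then use Lemma \ref{tangent_cotangent_structures} to transport the multiplicativity of $\mathsf D_G$ to the quotient.

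First, I would note that since $\mathsf D_G$ is integrable, the characteristic distribution $\mathsf{G_0}$ is involutive; together with the hypotheses that $\mathsf{G_0}$ is a subbundle, is complete, is multiplicative (Proposition \ref{G_0_multiplicative}), and satisfies \eqref{condition}, Theorem \ref{groupoid_leaf_space} then furnishes a Lie groupoid structure on $G/\mathsf{G_0}\rr P/\mathsf{G_0}$ making $(\pr,\pr_\circ)$ a Lie groupoid morphism, and in fact a fibration under the smoothness assumption on the two quotients.

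Second, I would define the push-forward Dirac structure pointwise by
\[
\mathsf D_{G/\mathsf{G_0}}([g]):=\bigl\{(T_g\pr\, v_g,\alpha_{[g]})\;\big|\;(v_g,(T_g\pr)^*\alpha_{[g]})\in\mathsf D_G(g)\bigr\}.
\]
Existence of a lift is guaranteed because $(T_g\pr)^*\alpha_{[g]}$ annihilates $\mathsf{G_0}(g)=\ker T_g\pr$, and therefore lies in the image of $\mathsf D_G(g)$ in $T_g^*G$ by the Lagrangian property. Independence of the representative $v_g$ follows from $(v_g-v'_g,0)\in\mathsf D_G(g)\Rightarrow v_g-v'_g\in\mathsf{G_0}(g)\Rightarrow T_g\pr v_g=T_g\pr v'_g$; independence of the representative $g$ within its leaf is obtained by flowing along a complete $\tg$- or $\s$-descending section of $\mathsf{G_0}$ and using that sections of $\mathsf{G_0}\subseteq \mathsf D_G$ preserve $\mathsf D_G$ under the Courant--Dorfman bracket. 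Smoothness and the Lagrangian property of $\mathsf D_{G/\mathsf{G_0}}$ then follow from constant rank and a dimension count, and integrability is inherited from $\mathsf D_G$ via lifts along $\pr$. By construction, $\pr$ is a forward Dirac map. Moreover, if $(v_{[g]},0)\in\mathsf D_{G/\mathsf{G_0}}([g])$, then a lift $(v_g,0)\in\mathsf D_G(g)$ satisfies $v_g\in\mathsf{G_0}(g)=\ker T_g\pr$, so $v_{[g]}=0$; hence $\mathsf D_{G/\mathsf{G_0}}$ has trivial null directions and is the graph of a Poisson bivector $\pi$.

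Third, and this is the main content, I would verify that $\mathsf D_{G/\mathsf{G_0}}$ is a Lie subgroupoid of the Pontryagin groupoid of $G/\mathsf{G_0}$, i.e.\ that $\pi$ is multiplicative. Consider two composable elements $(v_{[g]},\alpha_{[g]}),(v_{[h]},\alpha_{[h]})\in\mathsf D_{G/\mathsf{G_0}}$. By Lemma \ref{tangent_cotangent_structures} we may assume $\s(g)=\tg(h)$ and pick lifts $(v_g,(T_g\pr)^*\alpha_{[g]}),(v_h,(T_h\pr)^*\alpha_{[h]})\in\mathsf D_G$; the same lemma gives $w_g\in\mathsf{G_0}(g)$ with $T_g\s(v_g-w_g)=T_h\tg v_h$, and $(w_g,0)\in\mathsf D_G(g)$ by definition of $\mathsf{G_0}$, so $(v_g-w_g,(T_g\pr)^*\alpha_{[g]})$ still lies in $\mathsf D_G(g)$ and is now composable with $(v_h,(T_h\pr)^*\alpha_{[h]})$ in $\mathsf P_G$. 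Multiplicativity of $\mathsf D_G$ yields
\[
\bigl((v_g-w_g)\star v_h,\;(T_g\pr)^*\alpha_{[g]}\star (T_h\pr)^*\alpha_{[h]}\bigr)\in\mathsf D_G(g\star h),
\]
and the second half of Lemma \ref{tangent_cotangent_structures} identifies the $T\pr\oplus(T\pr)^*$-image of the left-hand side with $(v_{[g]}\star v_{[h]},\,\alpha_{[g]}\star\alpha_{[h]})$, placing this product in $\mathsf D_{G/\mathsf{G_0}}([g\star h])$. The compatibilities with source, target, units and inversion of $\mathsf P_{G/\mathsf{G_0}}$ are handled by the same dictionary applied to the corresponding assertions for $\mathsf D_G\subseteq\mathsf P_G$.

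The main obstacle is precisely this last step: matching the groupoid operations on $\mathsf P_{G/\mathsf{G_0}}$ with those on $\mathsf P_G$ across the projection $\pr$. Everything else (reduction of the Dirac structure, smoothness, Poisson character, forward Dirac property of $\pr$) is a routine adaptation of the familiar reduction of integrable Dirac manifolds by their null foliation, whereas the multiplicativity transfer genuinely relies on the careful lifting statements assembled in Lemma \ref{tangent_cotangent_structures}.
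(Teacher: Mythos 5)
Your proposal is correct and follows essentially the same route as the paper: apply Theorem \ref{groupoid_leaf_space} to get the quotient Lie groupoid, push the Dirac structure forward and observe that its characteristic distribution is trivial so it is Poisson, and then transfer multiplicativity via Lemma \ref{tangent_cotangent_structures} exactly as you describe. The only cosmetic difference is that the paper delegates the existence and integrability of the pushed-forward Dirac structure to a reduction theorem of Zambon (after verifying its hypotheses), whereas you sketch that reduction directly.
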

\begin{proof}
Recall that  since  $\mathsf D_G$ is assumed to be a Dirac structure, the subbundle
 $\mathsf{G_0}\subseteq TG$ is involutive.
Since $\mathsf{G_0}$ is multiplicative 
by Proposition \ref{G_0_multiplicative}
and all the hypotheses for Theorem \ref{groupoid_leaf_space}
are hence satisfied, we get that $G/\mathsf{G_0}\rr P/\mathsf{G_0}$
has the structure of a Lie groupoid such that if 
$\pr:G\to G/\mathsf{G_0}$ and $\pr_\circ:P\to P/\mathsf{G_0}$
are the projections, then
$(\pr,\pr_\circ)$ is a Lie groupoid morphism.

We have $\mathsf D_G\cap(TG\times_G\mathsf{G_0}^\circ)
=\mathsf D_G\cap(TG\times_G\mathsf{P_1})=\mathsf D_G$ and 
$[\Gamma(\mathsf{G_0}\times_G\{0\}), \Gamma(\mathsf D_G)]\subseteq \Gamma(\mathsf
D_G)$ because $\mathsf D_G$ is Dirac.
Hence, by a result in \cite{Zambon08} (see also \cite{JoRaZa11}), we find that 
the Dirac structure pushes-forward to the quotient $G/\mathsf{G_0}$.
The (almost) Dirac structure $\pr(\mathsf D_G)$  is given by
\[\pr(\mathsf D_G)([g])=
\left\{(v_{[g]},\alpha_{[g]})\in\mathsf P_{G/\mathsf{G_0}}([g])\left|
\begin{array}{c}
\exists v_g\in T_gG \text{ such that }\\
(v_g, (T_g\pr)^*\alpha_{[g]})\in\mathsf D_G(g)\\
\text{ and } T_g\pr v_g=v_{[g]}
\end{array}\right.\right\}\]
for all $g\in G$.
The fact that $\pr(\mathsf D_G)$ is a Dirac structure follows $\mathsf D_G$ being a Dirac structure.
If $(v_{[g]},0)\in \pr(\mathsf D_G)([g])$, there exists 
$v_g\in T_gG$ such that $T_g\pr v_g=v_{[g]}$
and $(v_g,0)\in\mathsf D_G(g)$. But then we get $v_g\in\mathsf{G_0}(g)$
and hence $v_{[g]}=T_g\pr v_g=0_{[g]}$. This shows that
the characteristic distribution $\mathsf{G_0}$
associated to the Dirac structure $\pr(\mathsf D_G)$ is trivial, and 
since it is integrable, $\pr(\mathsf D_G)$ is the graph
of the vector bundle homomorphism $T^*(G/\mathsf{G_0})\to T(G/\mathsf{G_0})$
associated to a Poisson bivector on $G/\mathsf{G_0}$.

We have then to show that the Dirac structure $\pr(\mathsf D_G)$
on $G/\mathsf{G_0}$ is multiplicative.  
Choose  $(v_{[g]}, \alpha_{[g]})
\in \pr(\mathsf D_G)([g])$
and $(v_{[h]}, \alpha_{[h]})\in \pr(\mathsf D_G)([h])$
such that $\TT\s(v_{[g]}, \alpha_{[g]})=\TT\tg(v_{[h]}, \alpha_{[h]})$. 
We can then assume without loss of generality that $\s(g)=\tg(h)$
(see Lemma \ref{tangent_cotangent_structures}).
By the definition of $\pr(\mathsf D_G)$, we find 
then $v_g\in T_gG$ and $v_h\in T_hG$
such that $T_g\pr v_g= v_{[g]}$, 
$T_h\pr v_h=v_{[h]}$ and 
$(v_g, (T_g\pr)^*\alpha_{[g]})\in\mathsf D_G(g)$, 
$(v_h, (T_h\pr)^*\alpha_{[h]})\in\mathsf D_G(h)$.
Since $$\TT\s(v_g,(T_g\pr)^*\alpha_{[g]})\in\mathsf D_G(\s(g))$$
and $$T_{\s(g)}\pr(T_g\s v_g)=T_{[g]}\s v_{[g]},$$
$$(T_{\s(g)}\pr)^*(\s(\alpha_{[g]}))=\hat\s((T_g\pr)^*\alpha_{[g]}),$$ we find that 
$$\TT\s(v_{[g]}, \alpha_{[g]})\in \pr(\mathsf D_G)(\s[g]).$$ In the same manner, 
we get that 
$\TT\tg(v_{[g]}, \alpha_{[g]})\in \pr(\mathsf D_G)(\tg[g])$ and the Dirac structure 
is closed under the source and target maps on $\mathsf P_{G/\mathsf{G_0}}$.

By Lemma \ref{tangent_cotangent_structures}, we find 
$w_g\in \mathsf{G_0}(g)$ such that $T_g\s(v_g-w_g)=T_h\tg v_h$
and $T_{g\star h}\pr((v_g-w_g) \star v_h)=v_{[g]}\star v_{[h]}$. 
By the same Lemma, we have 
$\hat\s((T_g\pr)^*\alpha_{[g]})=\hat\tg((T_h\pr)^*\alpha_{[h]})$
and $(T_g\pr)^*\alpha_{[g]}\star (T_h\pr)^*\alpha_{[h]}
=(T_{g\star h}\pr)^*(\alpha_{[g]}\star \alpha_{[h]})$. 
The pairs
$(v_g-w_g, (T_g\pr)^*\alpha_{[g]})\in\mathsf D_G(g)$ and 
$(v_h, (T_h\pr)^*\alpha_{[h]})\in\mathsf D_G(h)$
are hence compatible and their product, 
$$((v_g -w_g)\star v_h, (T_{g\star h}\pr)^*(\alpha_{[g]}\star \alpha_{[h]}))$$
is an element of $\mathsf D_G(g\star h)$.
Since it pushes forward to $(v_{[g]}\star v_{[h]}, \alpha_{[g]}\star \alpha_{[h]})$, 
we find that $(v_{[g]}\star v_{[h]}, \alpha_{[g]}\star \alpha_{[h]})\in 
\pr(\mathsf D_G)([g]\star[h])$.

It remains to show that the inverse $(v_{[g]},\alpha_{[g]})\inv$
is an element of $\pr(\mathsf D_G)([g]\inv)$. Recall
that $[g]\inv=[g\inv]$. We have 
$(v_g,(T_g\pr)^* \alpha_{[g]})\inv\in\mathsf D_G(g\inv)$.
Since 
\[\left((T_g\pr)^* \alpha_{[g]}\right)\star \left((T_{g\inv}\pr)^*\alpha_{[g]}\inv\right)
=\hat\tg((T_g\pr)^* \alpha_{[g]})\]
and 
in the same manner
\[\left((T_{g\inv}\pr)^*\alpha_{[g]}\inv\right)\star \left((T_g\pr)^* \alpha_{[g]}\right)
=\hat\s((T_g\pr)^* \alpha_{[g]}),\] we find that 
$(T_{g\inv}\pr)^*\alpha_{[g]}\inv=((T_g\pr)^* \alpha_{[g]})\inv$.
Since $(\pr,\pr_\circ)$ is a morphism of Lie groupoids, we find also
that $T_{g\inv}\pr (v_g\inv)=(T_g\pr v_g)\inv=v_{[g]}\inv$.
Thus,\linebreak $(v_g,(T_g\pr)^* \alpha_{[g]})\inv\in\mathsf D_G(g\inv)$
pushes forward to $(v_{[g]},\alpha_{[g]})\inv$, which is consequently
an element of $\pr(\mathsf D_G)([g]\inv)$.
\end{proof}

\begin{example}[Trivial example]
Let $G\rr P$ be a $\tg$-connected Lie groupoid and 
 $S\subseteq TG$ a  multiplicative, involutive 
subbundle. Then, if $S^\circ \subseteq T^*G$ is the annihilator
of $S$, the  Dirac structure $\mathsf D=S\oplus S^\circ$
is multiplicative with characteristic distribution equal to $S$. 
If $S$ is complete and simple, then the reduced 
Poisson groupoid is just the trivial Poisson groupoid $(G/S\rr P/S, \pi=0)$.
\end{example}

\begin{remark}
In the situation of the previous theorem,
the multiplicative 
subbundle $\mathsf{G_0}$  of $TG$ has constant rank on $G$. In particular, 
the intersection $TP\cap\mathsf{G_0}$ is a smooth vector bundle over $P$
and for each $g\in G$, the restriction 
to $\mathsf{G_0}(g)$
of the target map, $T_g\tg:\mathsf{G_0}(g)\to \mathsf{G_0}(\tg(g))\cap T_{\tg(g)}P$,
is surjective (see Lemma \ref{constant_rank}). 
By a  theorem in \cite{Jotz11c}, there exists
then a Dirac structure $\mathsf D_P$ on $P$ such that 
$\tg: (G, \mathsf D_G)\to (P,\mathsf D_P)$
is a forward Dirac map.
Since $(G/\mathsf{G_0}\rr P/\mathsf{G_0}, \pr(\mathsf D_G))$
is a Poisson groupoid,  we know also
by a theorem in \cite{Weinstein88b} that there is   
a Poisson structure $\{\cdot\,,\cdot\}_{P/\mathsf{G_0}}$ on $P/\mathsf{G_0}$ such that
$[\tg]:(G/\mathsf{G_0}, \pr(\mathsf D_G))\to (P/\mathsf{G_0},\{\cdot\,,\cdot\}_{P/\mathsf{G_0}})$
is a forward Dirac map.
It is easy to check that
the map $\pr_\circ:(P,\mathsf D_P)\to (P/\mathsf{G_0},\{\cdot\,,\cdot\}_{P/\mathsf{G_0}})$
is then also a forward Dirac map, i.e.,
$\mathsf D_{\{\cdot\,,\cdot\}_{P/\mathsf{G_0}}}$ is the forward Dirac image 
of $\mathsf D_P$ under $\pr_\circ$.  
\end{remark}

\begin{remark}
In the Lie group case, the Poisson Lie group $(G/N, q(\mathsf{D}_G))$ associated to a
 Dirac Lie group
$(G, \mathsf D_G)$ satisfying the necessary regularity assumptions 
was also a \emph{Poisson homogeneous space}
of the Dirac Lie group. In general, the Poisson groupoid 
associated to the Dirac  groupoid is \emph{not} a Poisson homogeneous space of the Dirac groupoid
since the quotient $G/\mathsf{G_0}$ is not a homogeneous space of the Lie groupoid $G\rr P$.
\end{remark}

\section*{Acknowledgments}
 I am very grateful to my advisor Tudor Ratiu for many discussions, for
his enthusiasm and his good advice.
It is a pleasure to dedicate this note, which is a part of my Ph.D. thesis, to him.
I would also like to thank Marco Zambon for Example 
\ref{example_of_marco}, Henrique Bursztyn for Example \ref{example_of_henrique}
and the referees for their comments that have improved a lot the exposition.

\def\cprime{$'$} \def\polhk#1{\setbox0=\hbox{#1}{\ooalign{\hidewidth
  \lower1.5ex\hbox{`}\hidewidth\crcr\unhbox0}}}
\providecommand{\bysame}{\leavevmode\hbox to3em{\hrulefill}\thinspace}
\providecommand{\MR}{\relax\ifhmode\unskip\space\fi MR }
\providecommand{\MRhref}[2]{%
  \href{http://www.ams.org/mathscinet-getitem?mr=#1}{#2}
}
\providecommand{\href}[2]{#2}

\medskip
% The data information below will be filled by AIMS editorial staff
Received xxxx 20xx; revised xxxx 20xx.
\medskip

\end{document}